\newcommand{\rk}{{\rm rk}}
\newcommand{\embedFOL}{{\embed_{\L_{\omega \omega}} }}
\newtheorem{theorem}{Theorem}[section]
\newtheorem{lemma}[theorem]{Lemma}
\newtheorem{corollary}[theorem]{Corollary}
\newtheorem{proposition}[theorem]{Proposition}
\newtheorem{Conjecture}[theorem]{Conjecture}
\newtheorem{Question}[theorem]{Question}
\newtheorem{Fact}[theorem]{Fact}
\newtheorem{claim}[theorem]{Claim}
\theoremstyle{definition}
\newtheorem{definition}[theorem]{Definition}
\newtheorem{Example}[theorem]{Example}
\newtheorem{Remark}[theorem]{Remark}
\renewcommand{\DeclareMathOperator}[1]{\newcommand{#1}}
\newcommand{\Ord}{\mathrm{Ord}}
\DeclareMathOperator{\Ve}{\mathbf{V}}
\newcommand{\B}{{\mathcal{B}}}
\newcommand{\He}{{\mathbf{H}}}
\newcommand{\A}{{\mathcal{A}}}
\newcommand{\M}{{\mathcal{M}}}
\newcommand{\N}{{\mathcal{N}}}
\newcommand{\K}{{\mathcal{K}}}
\newcommand{\SR}{{\mathcal{SR}}}
\renewcommand{\L}{{\mathcal{L}}}
\newcommand{\embed}{\preccurlyeq}
\newcommand{\USR}{{\:\mathcal{USR}}}
\newcommand{\till}{{\upharpoonright}}
\newcommand{\Lww}{{ \L_{\omega\omega} }}
\newcommand{{\LI}}{{\L_{\mathrm{I}}}}
\newcommand{\Cd}{\mathrm{Cd}}
\newcommand{\PwSt}{\mathrm{PwSt}}
\newcommand{\Pow}{\wp}
\newcommand{\Q}{{\mathcal{Q}}}
\newcommand{\Mod}{{\rm Mod}}
\newcommand{\LST}{{{\sf LST}}}
\newcommand{\ULST}{{{\sf ULST}}}
\newcommand{\dep}{{\rm dep}}
\newcommand{\ZFC}{{\rm \sf ZFC}}
\newcommand{\vir}[1]{``#1''}
\newcommand{\p}{\medskip \noindent}
\newcommand{\DDelta}{{\underline{\Delta}}}
\newcommand{\SSigma}{{\underline{\Sigma}}}
\newcommand{\PPi}{{\underline{\Pi}}}
\newcommand{\leqqq}{\leftrightarrow}
\newcommand{\trcl}{{\rm trcl}}
\newcommand{\id}{{\rm id}}
\let\@fnsymbol\@arabic
\title{Bounded Symbiosis and Upwards Reflection} 
\author{Lorenzo Galeotti\thanks{ Amsterdam University College, Postbus 94160, 1090 GD Amsterdam, The Netherlands} \hskip0.5mm \thanks{ Institute for Logic, Language and Computation, Universiteit van Amsterdam, Postbus 94242, 1090 GE Amsterdam, The Netherlands}, 
Yurii Khomskii\footnotemark[1] \hskip0.5mm \thanks{Universit\"at Hamburg, Bundesstra{\ss}e 55, 20146 Hamburg} \hskip0.5mm \thanks{This author has received funding from the European Union’s Horizon 2020 research and innovation programme under the Marie Sk\l odowska-Curie grant agreement No 706219 (REGPROP).}, 
Jouko V\"a\"an\"anen\footnotemark[2] \hskip0.5mm \footnote{Department of Mathematics and Statistics, University of Helsinki, Finland} \hskip0.5mm \thanks{The author would like to thank the Academy of Finland, grant no.: 322795}}
\begin{document}

\maketitle

\begin{abstract}
In \cite{Symbiosis}, Bagaria and V\"a\"an\"anen developed a  framework for studying the large cardinal strength of \emph{downwards} L\"owenheim-Skolem theorems and related set theoretic reflection properties. The main tool was the notion of \emph{symbiosis}, originally introduced by the third author in \cite{JoukoThesis, Jouko1979}.

\emph{Symbiosis} provides a way of relating model theoretic properties of strong logics to definability in set theory. In this paper we continue the systematic investigation of \emph{symbiosis} and apply it to \emph{upwards} L\"owenheim-Skolem theorems and reflection principles. To achieve this, we need to adapt the notion of \emph{symbiosis} to a new form, called \emph{bounded symbiosis}. As one easy application, we obtain  upper and lower bounds for the large cardinal strength of upwards L\"owenheim-Skolem-type principles for second order logic. 
\end{abstract}

\section{Introduction} \label{SectionIntro}

Mathematicians have two ways of characterizing a class $\mathcal{C}$ of mathematical structures: \emph{definining} the class in set theory, or \emph{axiomatizing} the class by sentences in logic. Symbolically: \begin{enumerate}
\item   $\Phi(\A)$, where $\Phi$ is a formula in the language of set theory,  vs.
\item  $\A \models \varphi$, where $\varphi$ is a sentence  in some logic. \end{enumerate}
In general, set theory is much more powerful than first order logic. 

However, by  \emph{restricting} the  allowed complexity of $\Phi$ on one hand, while considering \emph{extensions}   of first-order logic on the other hand, one gets a more interesting picture.  \emph{Symbiosis} aims to capture an equivalence in strength between set-theoretic definability and model-theoretic axiomatisability. One application of this is connecting properties of some strong logic $\L^*$ to specific set-theoretic principles (often expressed in terms of large cardinals). \emph{Symbiosis}  was first introduced by the third author in \cite{Jouko1979}, and studied further in  \cite{MagidorJouko,BagariaCn,Symbiosis}.

\bigskip



If $\A$ is a structure and $\phi$ a first-order formula, then the statement ``$\A \models \phi$'' is $\Delta_1$ in set theory. Therefore every first-order axiomatizable class of structures, i.e., every class of the form $\Mod(\phi) = \{\A \; : \; \A \models \phi\}$, is $\Delta_1$-definable. 

The converse does not hold: for example, the class of all well-ordered structures is easily seen to be   $\Delta_1$-definable but not first-order axiomatizable. So it is  natural to look for a  logic $\L^*$ extending first order logic, with the property that every $\Delta_1$-definable class would be axiomatizable by an $\L^*$-sentence


Consider the logic $\LI = \L_{\omega \omega}(\mathrm{I})$ obtained from first order logic $\L_{\omega\omega}$ by adding the  \emph{H\"artig quantifier} $\mathrm{I}$, defined by $$\mathcal{A} \models \mathrm{I}xy \; \phi(x) \psi(y)\mbox{ iff }|\{a \in A \,: \, \A \models \phi[a]\}| = |\{b \in A \, : \, \A \models \psi[b]\}|$$
and  consider its closure under the so-called  $\DDelta$-operator (Definition \ref{DefDelta}).\footnote{Usually the symbol used here is a simple $\Delta$, but in this paper we choose  the symbol $\DDelta$, and similarly $\SSigma$, in order to easily distinguish the model-theoretic notions from the L\'evy complexity of formulas in the language of set theory, i.e.,  $\Sigma_n$ and $\Delta_n$ formulas.}  We then obtain a logic, which we will call $\DDelta(\LI)$, such that every $\Delta_1$-definable class, if closed under isomorphisms, is $\DDelta(\LI)$-axiomatisable (see Proposition \ref{symbiosis2} or \cite[Example 2.3]{Jouko1979}).

However, $\DDelta(\LI)$-axiomatisability is now too strong to be ``symbiotic'' with $\Delta_1$-definability:  the class $$\{(A,P) \;\:  : \:  \; |\{x \in A \, : \, P(x)\}| = |\{x \in A \, : \, \lnot P(x)\}|\}$$ is not  $\Delta_1$ (it is not absolute), but it is axiomatisable in $\LI$ by the sentence
$$\mathrm{I}xy (P(x)) (\lnot P(y)).$$

One can observe that all $\Delta(\LI)$-axiomatisable classes are $\Delta_2$-definable, but once more, there are $\Delta_2$-definable classes that are not $\Delta(\LI)$-axiomatisable (see Figure \ref{stairs}).

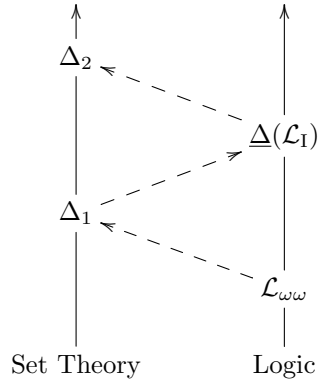
\begin{figure}[h]
$$
\xymatrix@C=0.5cm@R=0.5cm{
 & &   \\
 \Delta_2 \ar@{-}[dd]   \ar@{->}[u] & &    \\
 & & \DDelta(\LI)   \ar@{-}[dd]\ar@{->}[uu]\ar@{-->}[ull] \\
\Delta_1\ar@{-->}[urr]  \ar@{-}[dd]  & &  \\
  & & \L_{\omega \omega} \ar@{-->}[ull]  \ar@{-}[d] \\ 
\text{Set Theory} & & \text{Logic}}
$$
\caption{Set-theoretic definability vs. axiomatization in a logic} \label{stairs} \end{figure}

Interesting \emph{symbiosis} relationships take place for complexity levels $\Delta_1(R)$, for fixed predicates $R$.  In this paper, we will focus on $\Pi_1$ predicates $R$, so the complexity levels will lie below $\Delta_2$. Many such relations have been established in \cite{Jouko1979,Symbiosis}. To name some prominent examples, let $\L^2$ be second order logic with full semantics, and let 
$\L_\mathsf{WF}$ be the logic obtained from $\L_{\omega\omega}$ by adding the generalized quantifier $\mathsf{WF}$ defined by 
\begin{align*}
\A\models 
\mathsf{WF}xy \: \phi(x,y)  \mbox{ iff } 
& \{(x,y)\in A\times A\,:\, \A\models 
\phi(x,y)\}\mbox{ is well-founded.}
\end{align*}
Furthermore, let 
$\Cd(x)$ be the $\Pi_1$ predicate ``$x$ is a cardinal'',
and let $\PwSt(x,y)$ be the $\Pi_1$ predicate ``$y = \Pow(x)$''. Then we have the \emph{symbiosis} relationships depicted in Figure \ref{figuresymbiosis} (see Propositions \ref{symbiosis1}, \ref{symbiosis2} and \ref{symbiosis3}).

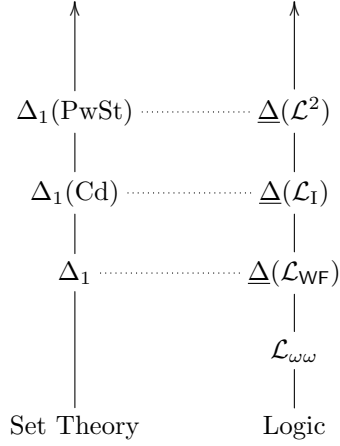
\begin{figure}[h]

$$
\xymatrix@C=0.5cm@R=0.5cm{
  & & \\
 & &   \\
 \Delta_1(\PwSt)  \ar@{-}[d]  \ar@{..}[rr]  \ar@{->}[uu] & & \DDelta(\L^2)  \ar@{->}[uu] \ar@{-}[d] \\
\Delta_1(\Cd) \ar@{-}[d]  \ar@{..}[rr] & & \DDelta(\LI)   \ar@{-}[d] \\
\Delta_1  \ar@{..}[rr] \ar@{-}[dd]  & &  \DDelta(\L_{\mathsf{WF}}) \ar@{-}[d]\\
& & \L_{\omega \omega}  \ar@{-}[d] \\
\text{Set Theory} & & \text{Logic}}
$$
\caption{Symbiosis relations} \label{figuresymbiosis}\end{figure}

As an application of \emph{symbiosis}, Bagaria and V\"a\"an\"anen \cite{Symbiosis} considered the following principles: 

\begin{definition} 
The \emph{downward L\"owenheim-Skolem-Tarski number}  $\LST(\L^{*})$ is the smallest cardinal $\kappa$ such that for all $\phi \in \L^{*}$, if $\A \models_{\L^{*}} \phi$ then there exists a substructure $\B \subseteq \A$ such that $|B| \leq \kappa$ and $\B \models_{\L^{*}} \phi$. If such a $\kappa$ does not exist, $\LST(\L^{*})$ is undefined.
\end{definition}

\begin{definition} \label{SR} Let $R$ be a predicate in the language of set theory. The \emph{structural reflection number} $\SR(R)$ is the smallest cardinal $\kappa$ such that for every $\Sigma_1(R)$-definable class $\K$ of models in a fixed vocabulary, for every $\A \in \K$ there exists $\B\in\K$ with  $|B| \leq \kappa$ and a first-order elementary embedding $e: \B \; \embed \;   \A$. If such a $\kappa$ does not exists,  $\SR(R)$ is undefined. 
\end{definition}


\begin{theorem}[Bagaria \& V\"a\"an\"anen \cite{Symbiosis}] \label{aaa} Suppose $\L^{*}$ and $R$ are symbiotic. Then $\LST(\L^{*}) =\kappa$ if and only if $\SR(R)=\kappa$. \end{theorem}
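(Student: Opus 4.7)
The plan is to prove both inequalities $\SR(R) \leq \LST(\L^*)$ and $\LST(\L^*) \leq \SR(R)$ separately, using throughout the symbiosis relation between $\L^*$ and $R$: a class of structures closed under isomorphism is $\L^*$-axiomatisable if and only if it is $\Delta_1(R)$-definable. The backward inequality is essentially a direct translation; the forward one requires an auxiliary $\L^*$-axiomatisable class that encodes $\Sigma_1(R)$-membership inside a surrounding set-theoretic structure.

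For the easy direction $\LST(\L^*) \leq \SR(R)$, set $\kappa := \SR(R)$ and let $\phi \in \L^*$ with $\A \models_{\L^*} \phi$. The class $\Mod(\phi)$ is closed under isomorphism, hence by symbiosis $\Delta_1(R)$-definable and in particular $\Sigma_1(R)$-definable. Applying $\SR(R)$ to $\Mod(\phi)$ and $\A$ yields some $\B \in \Mod(\phi)$ with $|B| \leq \kappa$ and a first-order elementary embedding $e : \B \embed \A$. The image $e(\B) \subseteq \A$ is a substructure of $\A$ of cardinality $\leq \kappa$ isomorphic to $\B$, hence also in $\Mod(\phi)$, witnessing $\LST(\L^*) \leq \kappa$.

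For the harder direction $\SR(R) \leq \LST(\L^*)$, set $\kappa := \LST(\L^*)$. Fix a $\Sigma_1(R)$-definition $\K = \{X : \exists z\, \psi_R(X,z)\}$ with $\psi_R \in \Delta_0(R)$, and fix $\A \in \K$ together with a witness $z_0$. Choose $\theta$ large enough that $\A, z_0 \in H_\theta$ and that $R^{H_\theta}$ reflects $R$ on the relevant parameters, and form the expanded structure $\M := (H_\theta, \in, R^{H_\theta}, \A, z_0)$. Consider the class $\K^*$ of structures $(N, E, S, c_1, c_2)$ in this vocabulary such that $E$ is extensional and well-founded, $S$ represents $R$ correctly on the transitive collapse, and after collapsing $c_1$ lies in $\K$ with witness $c_2$. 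This class is closed under isomorphism and $\Delta_1(R)$-definable, so by symbiosis it is axiomatised by some $\phi^* \in \L^*$, and $\M \models \phi^*$. Apply $\LST(\L^*)$ to obtain an $\L^*$-elementary substructure $\M_0$ of $\M$ with $|M_0| \leq \kappa$; since $\L^* \supseteq \Lww$, the inclusion is also first-order elementary. Well-foundedness of $\in$ on $M_0$ is automatic as a restriction, so Mostowski collapsing yields a transitive $\bar{\M}_0$; the image $\B := \pi(\A)$ has $|B| \leq \kappa$ and lies in $\K$ because $\M_0 \in \K^*$. Finally, $\pi^{-1}$ restricted to the universe of $\B$ induces a first-order elementary embedding $\B \embed \A$, by the standard fact that elementary submodels of $H_\theta$ collapse to yield such embeddings.

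The main obstacle is the careful design of $\K^*$ together with the choice of $\theta$: the class $\K^*$ must be simultaneously $\Delta_1(R)$-definable (so that symbiosis delivers $\phi^*$), robust under $\L^*$-elementary substructure and Mostowski collapse (so that the collapsed $\A$ still lies in $\K$), and faithful enough that the collapse map induces a first-order elementary embedding in the original vocabulary of $\A$ — not merely a set-theoretic one. Coordinating these points is precisely where the $\Pi_1$ nature of $R$ and the full force of symbiosis are invoked, since a bare $\Sigma_1(R)$-class $\K$ is a priori not $\L^*$-axiomatisable on its own.
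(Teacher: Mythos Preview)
The paper does not give a proof of this theorem; it simply cites \cite{Symbiosis}. Your overall architecture matches the standard argument (and in particular mirrors the paper's proof of its Main Theorem~\ref{SymbiosisTheorem}), but there is a genuine gap in the hard direction.

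The problem is your sentence ``Apply $\LST(\L^*)$ to obtain an $\L^*$-elementary substructure $\M_0$ of $\M$''. By the definition used here, $\LST(\L^*)=\kappa$ only guarantees, for each single sentence $\phi$, a \emph{substructure} of size $\le\kappa$ satisfying that same $\phi$; it does not guarantee an $\L^*$-elementary (or even $\Lww$-elementary) substructure. Consequently your final step, where you invoke ``the standard fact that elementary submodels of $H_\theta$ collapse to yield such embeddings'', has no footing: you have not shown $\M_0\prec_{\Lww}\M$, so the collapse $\pi$ need not give $\B\embed_{\Lww}\A$. The fix is the one visible in the proof of Theorem~\ref{SymbiosisTheorem}: enlarge $\tau'$ by function symbols $f_\psi$ for each quantifier-free $\psi$ in $\{E,c\}$, and include in the definition of $\K^*$ the clause that each $f_\psi$ is a Skolem function. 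Then any substructure $\M_0\subseteq\M$ lying in $\K^*$ is automatically closed under these Skolem functions, hence $\Lww$-elementary in the $\{E,c\}$-reduct, and your collapse argument goes through.

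A smaller slip: symbiosis does not yield that $\K^*$ is axiomatised by a single $\phi^*\in\L^*$, only that it is $\DDelta(\L^*)$-axiomatisable. This is harmless, since one passes to an $\L^*$-sentence $\chi$ in an extended vocabulary $\tau''\supseteq\tau'$ with $\K^*$ the projection of $\Mod(\chi)$, applies $\LST$ to the $\tau''$-expansion, and then restricts back; but it should be said.
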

\begin{proof}
See \cite[Theorem 6]{Symbiosis}.
\end{proof}

Theorem \ref{aaa} links a meta-logical property of a strong logic to a reflection principle in set theory. Depending on the predicate $R$, the principle $\SR(R)$ has a varying degree of large cardinal strength. In fact, Definition \ref{SR} may be regarded as a kind of Vop\v{e}nka principle, restricted to classes of limited complexity.\footnote{See \cite[Sections 3, 4]{BagariaCn} for more on the connection between $\SR(R)$ and Vop\v{e}nka-type principles.} Indeed, in \cite{Symbiosis} Theorem \ref{aaa} was used to compute  the large cardinal strength of $\SR(R)$ and $\LST(\L^{*})$ for various symbiotic pairs.


\bigskip
In this paper we continue the work of Bagaria and V\"a\"an\"anen by developing a framework for the study of  \emph{upward} L\"owenheim-Skolem and  reflection principles. These principles are also interesting because they are closely related to the \emph{compactness}  of the strong logic, although the two notions are not equivalent, and in this paper we do not consider compactness explicitly.  The main innovation of the current work is that, in order to deal with upwards rather than downwards reflection, we need to adapt the notion of \emph{symbiosis}. 

\bigskip
The paper is organized as follows: in Section \ref{SectionPreliminaries} we introduce the necessary terminology and some background and in Section   \ref{SecSymbiosis} we present the notion of \emph{symbiosis}.   In Section \ref{SecBoundedSymbiosis} we  introduce  \emph{bounded symbiosis}. Section \ref{SecExamples} is devoted to  examples of \emph{ bounded symbiosis}, and in  Section   \ref{SecCountable} we  prove  the main theorem, showing that under  appropriate conditions the \emph{upward} L\"owenheim-Skolem number corresponds to a suitable upwards set-theoretic reflection principle. Finally, in   Section   \ref{SectionLargeCardinals} we apply our results to compute upper and lower bounds for the {upward} L\"owenheim-Skolem number of second order logic and the corresponding reflection principle, noting that this also provides an upper bound for all other $\Pi_1$ predicates.

\bigskip 

This paper contains research carried out by the first author as part of his PhD Dissertation. Some details have been left out of the paper for the sake of easier readability and navigation. The interested reader may find these details in \cite[Chapter 6]{GaleottiThesis}.

\bigskip
\section{Abstract Logics} \label{SectionPreliminaries}

We assume that the reader is familiar with standard set theoretic and model theoretic notation and terminology. We will consider abstract logics $\L^*$, without providing a precise definition for what counts as a ``logic''. Typical examples are infinitary logics $\L_{\kappa \lambda}$, full second-order logic $\L^2$, and various extensions of first-order logic by \emph{generalized quantifiers}. For a more detailed analysis we refer the reader to \cite[Chapter 6]{GaleottiThesis} and \cite{BarwiseBook}. Here we only want to stress two important points.

\bigskip

First, we will generally work with  \emph{many-sorted} languages,  using the symbols $s_0, s_1, \dots$ to denote  \emph{sorts}. In this setting, a \emph{domain} may be a collection of  domains (one for each sort), and all constant, relation and function symbols must have their sort specified in advance. This is a matter of convenience, since many-sorted logic can be simulated by standard single-sorted logic by introducing additional predicate symbols. The following definition is essential for what follows. 


\begin{definition} Suppose that $\tau \subseteq \tau'$ are many-sorted vocabularies and that $\M$ is a $\tau'$-structure. The \emph{reduct} (or \emph{projection})  of $\M$ to $\tau$, denoted by  $ \M \till \tau$, is the structure whose domains are those domains in the sorts available in $\tau$, and the interpretations of all symbols not in $\tau$ are ignored.
\end{definition} 
So a reduct $\M {\till} \tau$ can have a smaller domain, and a smaller cardinality, than the original model $\M$.

\bigskip
Secondly, we should note that one needs to be careful with the syntax of a given logic, because an unrestricted  use of syntax may give rise to some undesirable effects. Consider, for example, an arbitrary set $X \subseteq \omega$, and a vocabulary $\tau$ which has $\omega$-many relation symbols $\{R_i \; : \; i <\omega\}$, such that the arity of $R_i$ is $1$ if $i \in X$ and $2$ if $i \notin X$. The information about arities of relation symbols must be encoded in the vocabulary $\tau$. Therefore, the set $X$ can be computed from $\tau$. 

In infinitary logic, we can encode a set $X \subseteq \omega$ even with finite vocabularies. Let  $\phi_0,\phi_1,\ldots$ be a recursive  enumeration of  $\L_{\omega\omega}$-sentences in some fixed $\tau$, and consider the $\L_{\omega_1\omega}$-sentence  $\Phi := \bigwedge_{n\in X}\phi_n$.  Then $X$ can be computed from $\Phi$.


\begin{definition} Let $\L^{*}$ be a logic. The \emph{dependence number} of $\L^{*}$, ${\rm dep}(\L^{*})$, is the least $\lambda$ such that for any vocabulary $\tau$ and any $\L^*$-formula $\phi$ in $\tau$, there is a sub-vocabulary $\sigma\subseteq \tau$ such that $|\sigma| < \lambda$ and $\phi$ only uses symbols in $\sigma$. If such a number does not exist, ${\rm dep}(\L^{*})$ is undefined.  \end{definition}

\begin{definition} We say that a logic $\L^*$ has \emph{$\Delta_0$-definable syntax} if every $\L^*$-formula is $\Delta_0$-definable in set theory (as a syntactic object), possibly with the vocabulary of $\phi$ as parameter. \end{definition}

In our main Theorem \ref{SymbiosisTheorem}, we will restrict attention to logics with a $\Delta_0$-definable syntax and $\dep(\L^*) =  \omega$. Note that this includes all finitary logics obtained by adding finitely many generalized quantifiers to first- or second-order logic.

\bigskip

We end this section by defining a version of the upward L\"owenheim-Skolem number for abstract logics.

\begin{definition}[Upward L\"owenheim-Skolem number] \label{upwn} Let $\L^{*}$ be a logic. 
 \begin{enumerate}
\item  The \emph{upward L\"owenheim-Skolem number of $\L^{*}$ for ${<}\lambda$-vocabularies}, denoted by $\ULST_\lambda(\L^{*})$, is the smallest cardinal $\kappa$ such that

\begin{quote} for every vocabulary $\tau$ with $|\tau| < \lambda$ and every $\phi$ in $\L^{*}[\tau]$, if there is a model $\A \models \phi$ with $|\A| \geq \kappa$, then for every $\kappa' > \kappa$, there is a model $\B \models \phi$ such that $|\B|  \geq \kappa'$ and $\A$ is a substructure of $ \B$. \end{quote}

As usual, if there is no such cardinal then  $\ULST_\lambda(\L^{*})$ is undefined.

\item  The \emph{upward L\"owenheim-Skolem number} of $\L^{*}$, denoted by $\ULST_\infty(\L^{*})$ is the smallest cardinal $\kappa$ such that $\ULST_\lambda(\L^{*})\leq \kappa$ for all cardinals $\lambda$. Again, if there is no such cardinal then $\ULST_\infty(\L^{*})$ is undefined.

\end{enumerate}
\end{definition}

Notice that when $\dep(\L^*) = \lambda$, then $ \ULST_\lambda(\L^*) = \kappa$ implies $ \ULST_\infty(\L^*) = \kappa$. In general, $\ULST_\infty(\L^*)$ may fail to be defined even if all $ \ULST_\lambda(\L^*) = \kappa$ are defined. 

Recall also that the \emph{Hanf-number} of a logic is defined analogously to Definition \ref{upwn} but without the assumption that $\A$ is a substructure of $\B$. This additional assumption is rather crucial: it is easy to see that if the dependence number of a logic is defined, then the Hanf number is also defined (see \cite[Theorem 6.4.1]{BarwiseBook}). However, as we shall see in  Section \ref{SectionLargeCardinals}, the existence of upward L\"owenheim-Skolem numbers in the sense of Definition \ref{upwn}, even for logics with dependence number $\omega$, implies the existence of large cardinals. 

 
%
%
%

\section{Symbiosis} \label{SecSymbiosis}

\emph{Symbiosis} was introduced by the third author in \cite{Jouko1979}. To motivate its definition, let  $\L^*$  be a logic and $R$ a predicate in set theory. The aim is to establish an equality in strength between $\L^*$-axiomatizability and $\Delta_1(R)$-definability. One direction should be the statement ``the satisfaction relation $\models_{\L^*}$ is $\Delta_1(R)$-definable'', or, equivalently, ``every $\L^*$-axiomatizable class of structures is $\Delta_1(R)$-definable.''

 The converse direction should say, roughly speaking, that every $\Delta_1(R)$-definable class is $\L^*$-axiomatizable. This cannot literally work, because $\L^*$-model classes are closed under isomorphisms whereas this is not necessarily true for arbitrary $\Delta_1(R)$ classes. Therefore we try the approach ``every $\Delta_1(R)$-definable class closed under isomorphisms is $\L^*$-axiomatizable.'' 
 
Unfortunately, this does not always work: \emph{symbiosis } can { only } be established for  logics that are closed under the $\DDelta$-operation. This operation closes the logic under operations which are in a sense ``simple'' but not as simple as mere conjunction, negation, and whatever operations the logic has. In order to define the property of a structure $\A$ being in 
 a model class based merely on the  knowledge that the class is $\Delta_1(R)$, the only way seems to be to use the means of the logic
to build a piece of the set theoretic universe around $\A$, and work in the small universe.  The $\DDelta$-operation is then used to eliminate the extra symbols used to build the small universe.   See \cite{Symbiosis,Jouko1979,Makowsky1976} for more details on the $\DDelta$-operation and its use.

\begin{definition}  \label{DefDelta} Let $\L^{*}$ be a logic and let $\tau$  a fixed vocabulary. A class $\K$ of $\tau$-structures is \emph{$\SSigma(\L^{*})$-axiomatisable} if there exists $\phi$ in some extended vocabulary  $\tau' \supseteq \tau$ such that   $$\K = \{\A \, : \, \exists \B \; ( \B \models \phi  \text{ and }  \A = \B {\till} \tau )\}.$$ We say that $\K$ is the \emph{projection} of the class $\Mod(\phi)$ to $\tau$.

\p A class $\K$ is \emph{$\PPi(\L^{*})$-axiomatisable} if the complement of $\K$ (i.e., the class of $\tau$-structures not in $\K$) is $\SSigma(\L^*)$-axiomatizable, and $\DDelta(\L^*)$-axiomatizable if it is both $\SSigma(\L^*)$ and $\PPi(\L^*)$-axiomatizable. 
 \end{definition}

Note that, if $\tau'$ has more sorts than $\tau$, then the structures $\B$ can be larger than their reducts $\A = \B {\till} \tau$. 

Since $\DDelta(\L^*)$-axiomatizable classes   are closed under unions, intersections, complements and projections, one could consider $\DDelta(\L^*)$ itself as an abstract logic, whose model classes are exactly the  $\DDelta(\L^*)$-axiomatizable classes. In general,  $\DDelta(\L^*)$  is a non-trivial extension of $\L^*$.  However, for first-order logic, and in general any logic satisfying  the  the \emph{Craig Interpolation Theorem}, two notions coincide, see \cite[Lemma 2.7]{Makowsky1976}.


\begin{definition}[\textbf{Symbiosis}]\label{def:symbiosis} Let $\L^{*}$ be a logic and $R$  a predicate in the language of set theory. Then we say that \emph{$\L^{*}$ and $R$ are symbiotic} if: \begin{enumerate}[(1)]
\item the relation $\models_{\L^{*}}$ is $\Delta_1(R)$-definable, and
\item for every finite vocabulary $\tau$, every $\Delta_1(R)$-definable class of $\tau$-structures closed under isomorphisms is $\DDelta(\L^{*})$-axiomatisable. \end{enumerate} \end{definition}


In \cite{Symbiosis}, \emph{symbiosis} was established for many logic-predicate pairs, among them the ones mentioned in the introduction. 

 In practice, there is an equivalent condition to (2) which is easier to both verify and to apply. Let $R$ be an $n$-ary predicate in the language of set theory. We say that a transitive model of set theory $M$ is \emph{$R$-correct} if for all $m_1,\ldots,m_n \in M$ we have $M \models R(m_1,\ldots,m_n)$ iff $R(m_1,\ldots,m_n )$. 

\begin{lemma}[V\"a\"an\"anen \cite{Jouko1979}] \label{QR} For any predicate $R$ and logic $\L^*$, the following are equivalent: 
 \begin{enumerate}[(a)]
\item For every finite $\tau$, every $\Delta_1(R)$ class of $\tau$-structures closed under isomorphisms is $\DDelta(\L^{*})$-axiomatisable. 
\item The class ${\Q}_R := \{\A   \,: \, \A$ is isomorphic to a transitive $R$-correct $\in$-model$\}$ is $\DDelta(\L^{*})$-axiomatisable.
\end{enumerate}
\end{lemma}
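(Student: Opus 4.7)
The plan is to prove the two directions separately, with $(a) \Rightarrow (b)$ immediate and $(b) \Rightarrow (a)$ substantive. For $(a) \Rightarrow (b)$, I would check that $\Q_R$ is a $\Delta_1(R)$-definable class closed under isomorphisms and then invoke $(a)$. Closure under isomorphism is built into the definition. For $\Delta_1(R)$-definability, ``$\A$ is isomorphic to a transitive $\in$-model'' is the $\Delta_1$ statement that $\A$ is extensional and well-founded (via the unique Mostowski collapse), while $R$-correctness of the collapse is a $\Pi_1(R)$ absoluteness condition using the primitive atomic predicate $R$; together these give an overall $\Delta_1(R)$ description of $\Q_R$.

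For the main direction $(b) \Rightarrow (a)$, fix a finite $\tau$ and a class $\K$ of $\tau$-structures closed under isomorphisms, written $\K = \{\A : \phi_\Sigma(\A)\} = \{\A : \phi_\Pi(\A)\}$ with $\phi_\Sigma \in \Sigma_1(R)$ and $\phi_\Pi \in \Pi_1(R)$. I will give a $\SSigma(\L^*)$-axiomatization of $\K$; the $\PPi(\L^*)$-axiomatization using $\neg \phi_\Pi$ is entirely symmetric, so together they yield $\DDelta(\L^*)$-axiomatizability. The idea is to work in a many-sorted expansion $\tau^+$ of $\tau$ that adjoins a new sort $s_M$ with a binary relation $\in_M$, the vocabulary $\sigma_{\Q_R}$ needed to $\DDelta(\L^*)$-axiomatize $\Q_R$ on $s_M$, a constant symbol $c$ of sort $s_M$, and cross-sort function symbols $e_s : s \to s_M$ for each sort $s$ of $\tau$. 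The $\SSigma(\L^*)$-sentence $\Psi$ over $\tau^+$ asserts of a structure $\B$: first, that the $\sigma_{\Q_R}$-reduct of $\B$ lies in $\Q_R$, so its $(s_M, \in_M)$-part is isomorphic to a transitive $R$-correct set $M$; second, that the $e_s$ identify $\B \till \tau$ with the $\tau$-structure internally named by $c$ inside $M$; and third, the first-order translation of ``$M \models \phi_\Sigma(c)$'' in $\{\in_M, c\}$ with the primitive $R$-predicate. The projection of $\Mod(\Psi)$ to $\tau$ is then the claimed $\SSigma(\L^*)$-axiomatization of $\K$.

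Correctness runs in two steps. If $\A \in \K$ has witness $w$ for the leading $\Sigma_1(R)$-quantifier of $\phi_\Sigma$, I take the Mostowski collapse of a Skolem hull of $\TC(\{\A, w\})$ in $V$ that is elementary for enough $\Delta_0(R)$-formulas. Since $\TC(\{\A\})$ is already transitive, the collapse fixes $\A$ pointwise, producing a transitive $R$-correct set $M \ni \A$ satisfying $\phi_\Sigma(\A)$, from which $\B \models \Psi$ with $\B \till \tau = \A$ is assembled. Conversely, any $\B \models \Psi$ yields, by upward absoluteness of $\Sigma_1(R)$-formulas between an $R$-correct transitive $M$ and $V$, that $V \models \phi_\Sigma(\A^c)$ for the internal copy $\A^c$ of $\A = \B \till \tau$ living in $M$, and closure of $\K$ under isomorphism gives $\A \in \K$. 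The main obstacle I anticipate is the careful design of $\tau^+$ and $\Psi$ so that the projection captures exactly $\K$ and so that $\Psi$ is indeed $\SSigma(\L^*)$; the Skolem hull construction is standard but must respect the absoluteness needed for $R$-correctness, and the internal evaluation ``$M \models \phi_\Sigma(c)$'' must be implementable as a genuine first-order sentence on the $s_M$-sort.
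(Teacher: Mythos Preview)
Your approach is essentially the same as the paper's (which defers to the bounded analogue, Lemma~\ref{lemmacombi}): for $(b)\Rightarrow(a)$ you adjoin a set-theory sort, place an $R$-correct transitive model there via the $\DDelta(\L^*)$-axiomatization of $\Q_R$, internalize $\A$ by a constant and cross-sort maps, and assert the relativized $\Sigma_1(R)$ formula; the converse direction and the $\Delta_1(R)$ analysis of $\Q_R$ also match.

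One step needs repair. To exhibit, for $\A\in\K$, an expansion $\B\models\Psi$, you take a Skolem hull $N\supseteq\TC(\{\A,w\})$ in $(V,\in,R)$ and Mostowski-collapse it to a transitive $M$. The collapse $\pi$ does fix $\A$ pointwise, but for an \emph{arbitrary} predicate $R$ there is no reason $M$ is $R$-correct: for $m\in M$ outside $\TC(\{\A,w\})$ you only obtain $(M,\in)\models R(m)\iff R(\pi^{-1}(m))$, not $R(m)$, since the collapse moves such elements. The paper's version avoids this by taking $M=\He_\theta$, which is $R$-correct whenever $R$ is $\Pi_1$ (L\'evy, Lemma~\ref{lemmichka}). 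For the general $R$ in the statement of Lemma~\ref{QR}, replace the Skolem-hull-plus-collapse by a direct appeal to reflection: choose $V_\alpha$ with $\A,w\in V_\alpha$ reflecting both $R$ and $\phi_\Sigma$; then $V_\alpha$ is transitive, $R$-correct, and satisfies $\phi_\Sigma(\A)$, and your construction of $\B$ goes through unchanged.
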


\begin{proof} See  \cite{Jouko1979}, or a simpler version of  Theorem \ref{lemmacombi}. 
 \end{proof}

For completeness, and to illustrate how proofs of \emph{symbiosis} typically work in view of the results in the next section, we will now sketch  proofs of some paradigmatic examples  (see Section \ref{SectionIntro} for the definitions). $\ZFC^{-*}$ refers to a sufficiently large fragment of $\ZFC - $Power Set.

\newcommand{\LWF}{{\L_{\mathsf{WF}}}}

\begin{proposition}[V\"a\"an\"anen \cite{Jouko1979}] \label{symbiosis1} $\LWF$ and $\varnothing$ (no predicates) are symbiotic. \end{proposition}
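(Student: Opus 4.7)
The plan is to verify the two clauses of Definition~\ref{def:symbiosis} directly, using Lemma~\ref{QR} to handle clause~(2). Clause~(1) asks that $\models_{\LWF}$ be $\Delta_1$-definable (with no parameter), and clause~(2) reduces to showing that $\Q_\varnothing = \{\A : \A \text{ is isomorphic to a transitive } \in\text{-model}\}$ is $\DDelta(\LWF)$-axiomatisable.

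For clause~(1), I would proceed by induction on the complexity of $\phi \in \LWF$. The atomic, Boolean, and ordinary first-order quantifier steps are routine since Tarskian satisfaction for $\Lww$ is well known to be $\Delta_1$ in a sufficient fragment of $\ZFC^{-*}$. The only genuinely new case is the $\mathsf{WF}$ quantifier: one must show that, given the $\Delta_1$ definition of $\{(a,b) \in A^2 : \A \models \phi[a,b]\}$ supplied by the inductive hypothesis, the statement ``this relation is well-founded'' is itself $\Delta_1$. The key ingredient is the classical fact that well-foundedness of a set-sized binary relation admits both a $\Pi_1$ formulation (every non-empty subset has an $E$-minimal element) and a $\Sigma_1$ formulation (there exists a rank function into the ordinals), and hence is $\Delta_1$ over $\ZFC^{-*}$. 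Combining these two expressions with the inductive hypothesis yields matching $\Sigma_1$ and $\Pi_1$ definitions of $\A \models_{\LWF} \mathsf{WF}xy\,\phi(x,y)$.

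For clause~(2), by Lemma~\ref{QR} it suffices to axiomatise $\Q_\varnothing$ in $\DDelta(\LWF)$. By the Mostowski collapse lemma, a structure $(A,E)$ in the vocabulary $\{\in\}$ is isomorphic to a transitive $\in$-model if and only if $E$ is extensional and well-founded. Extensionality is expressed by the first-order sentence
\[
\forall x\, \forall y\, \bigl( \forall z\,(zEx \leftrightarrow zEy) \rightarrow x = y \bigr),
\]
and well-foundedness is expressed directly by the $\LWF$-sentence $\mathsf{WF}xy\,(xEy)$. The conjunction of the two is a single $\LWF$-sentence axiomatising $\Q_\varnothing$, and this is \emph{a fortiori} a $\DDelta(\LWF)$-axiomatisation (indeed no $\DDelta$-closure is needed here). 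Note that no extra sorts are required, so the reduct step in the definition of $\SSigma(\LWF)$ is trivial.

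The main conceptual point — and the only place where some care is needed — is the $\Delta_1$ analysis of well-foundedness in the induction for clause~(1); everything else is a direct application of Mostowski's theorem together with the fact that $\LWF$ was tailored precisely to internalise well-foundedness. No large cardinal or fine-structural assumptions are needed, and the argument goes through in a mild fragment of $\ZFC^{-*}$.
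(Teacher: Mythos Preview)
Your proposal is correct, and clause~(2) is essentially identical to the paper's argument: both observe that $\Q_\varnothing$ is axiomatised by the single $\LWF$-sentence $\textsc{Ext} \land \mathsf{WF}xy\,(xEy)$ via Mostowski collapse, so no genuine use of the $\DDelta$-closure is needed.

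For clause~(1) the approaches differ in presentation. You argue by direct induction on formula complexity, using that well-foundedness of a set relation is $\Delta_1$ (via the $\Pi_1$ ``every non-empty subset has a minimal element'' and the $\Sigma_1$ ``there exists a rank function'' characterisations) to handle the $\mathsf{WF}$ clause. The paper instead packages this as an absoluteness statement: since well-foundedness is $\Delta_1$, the relation $\A \models_{\LWF} \phi$ is absolute for transitive models of $\ZFC^{-*}$, and one then reads off a $\Delta_1$ definition via the standard equivalence
\[
\A \models_{\LWF} \phi \iff \exists M\,(\ldots) \iff \forall M\,(\ldots),
\]
quantifying over transitive models $M \ni \A$. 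Your route is more self-contained; the paper's route is the template that scales to the later examples (Propositions~\ref{symbiosis2} and~\ref{symbiosis3}), where one simply replaces ``transitive'' by ``transitive and $R$-correct'' for the relevant predicate $R$. Both are entirely standard and yield the same conclusion.
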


\begin{proof}

\begin{enumerate}[(1)]
\item Since the statement ``$(A,E)$ is well-founded'' for sets $A$ is $\Delta_1$ and therefore absolute for transitive models,  ``$\A \models_\LWF \phi$'' is also absolute for transitive models. Then $\A \models_\LWF \phi $

\medskip
{ $\;\;\;\; \text{ iff }  \exists M \: (M \text{ transitive} \; \land \; M \models \ZFC^{-*} \; \land \;  \A \in M  \; \land \;       M \models (\A \models_\LWF \phi) )$}

\medskip
$\;\;\;\; \text{ iff }  \forall M \: (( M \text{ transitive} \; \land \;   M \models \ZFC^{-*} \; \land \;  \A \in M )  \; \to \; M \models (\A \models_\LWF \phi) )$.

\smallskip
This gives a   $\Delta_1$-definition.


\item There are no predicates so  $\Q_\varnothing = \{(A,E)  :  (A,E)$ is isomorphic to a transitive $\in$-model$\}$. But  $(A,E)$ is isomorphic to a transitive $\in$-model iff $E$ is well-founded and extensional. Therefore  
 $(A,E)\in Q_\varnothing$ iff
$$(A,E) \models \textsc{Ext}\;  \land \;\mathsf{WF} xy (xEy)$$
which is an $\LWF$-sentence. Thus $\Q_\varnothing$ is $\LWF$-axiomatizable and therefore also $\DDelta(\LWF)$-axiomatizable. \qedhere
\end{enumerate}
\end{proof}

\begin{proposition}[V\"a\"an\"anen \cite{Jouko1979}] \label{symbiosis2} $\LI$ and $\Cd$ are symbiotic. \end{proposition}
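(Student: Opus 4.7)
For clause~(1), following the template of Proposition~\ref{symbiosis1}, the first observation is that equi-cardinality is $\Delta_1(\Cd)$: the $\Sigma_1$-direction is the existence of a bijection, while the $\Pi_1(\Cd)$-direction expresses $|X|\neq|Y|$ as the existence of distinct cardinals $\kappa\neq\lambda$ with $X\sim\kappa$ and $Y\sim\lambda$. Hence equi-cardinality, and by induction on $\LI$-formulas the full satisfaction relation $\models_{\LI}$, is absolute for $\Cd$-correct transitive models of a sufficient fragment $\ZFC^{-*}$. The proof of clause~(1) then mirrors Proposition~\ref{symbiosis1}:
\begin{align*}
\A\models_{\LI}\phi & \iff \exists M\,\bigl(M\text{ transitive, $\Cd$-correct}, M\models\ZFC^{-*}, \{\A,\phi\}\subseteq M, M\models(\A\models_{\LI}\phi)\bigr)\\
 & \iff \forall M\,\bigl(M\text{ as above}\to M\models(\A\models_{\LI}\phi)\bigr),
\end{align*}
which provides a $\Sigma_1(\Cd)$ and a $\Pi_1(\Cd)$ definition respectively; the existence of at least one such $M$ (e.g.\ $V_\theta$ for sufficiently large strong-limit $\theta$) guarantees that the reflective definition applies.

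For clause~(2), by Lemma~\ref{QR} it suffices to axiomatise $\Q_{\Cd}$ in $\DDelta(\LI)$. My plan is to expand the vocabulary $\{E\}$ by a unary predicate $C$, intended to mark elements that are cardinals in $V$, and to write an $\LI$-sentence $\phi$ in the expanded vocabulary whose projection back to $\{E\}$ is $\Q_\Cd$. The sentence $\phi$ would conjoin: (i)~extensionality of $(A,E)$; (ii)~that $C(x)$ coincides with the Hartig-definable formula ``$x$ is an $\in$-ordinal and no $yEx$ is $\mathrm{I}$-equipollent with $x$'' (so $C$ equals $\Cd^{M}$ when $M$ is the Mostowski collapse of $(A,E)$); (iii)~$E$-linearity and $E$-cofinality of $C$; and (iv)~that every $x\in A$ is $\mathrm{I}$-equipollent with some element of $C$. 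When $(A,E)\in\Q_\Cd$, interpreting $C$ as $\Cd^V\cap M$ yields a model of~$\phi$.

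The main obstacle is that $\LI$ expresses neither well-foundedness of $(A,E)$ nor $\Cd$-correctness of its Mostowski collapse, so both properties must be induced indirectly through the $\DDelta$-operator. My strategy is to enrich the expansion further with an auxiliary sort $S$, a linear order $<_S$ on $S$, and a monotone rank function $r\colon A\to S$ with $aEb\Rightarrow r(a)<_S r(b)$, using Hartig assertions to tie $(S,<_S)$ to the $E$-order on $C$. The key point is that the $\DDelta$-projection is taken in $V$, where the true cardinals are well-ordered; consequently, any realisation of the extended axioms in $V$ has its $C$ sitting inside $\Cd^V$ as a well-ordered set, which forces $<_S$ to be well-ordered and hence $r$ to witness well-foundedness of $(A,E)$, while the Hartig identification of $C$ with $\Cd^{M}$ delivers $\Cd$-correctness. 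For the $\PPi$-direction, the complement of $\Q_\Cd$ is $\SSigma(\LI)$-axiomatisable by exhibiting a specific defect: non-extensionality, a descending $E$-chain indexed by an $\LI$-definable $\omega$-like sort (which is $\LI$-definable using the Hartig quantifier combined with $C$), or a surjection from a smaller element onto some $c\in C$, showing $c\notin\Cd^V$ and hence violating $\Cd$-correctness. Combining the $\SSigma$- and $\PPi$-axiomatisations gives $\Q_\Cd\in\DDelta(\LI)$, which via Lemma~\ref{QR} completes clause~(2).
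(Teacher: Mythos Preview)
For clause~(1) your argument matches the paper's.

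For clause~(2) the paper is more direct. It observes that $(A,E)\in\Q_\Cd$ iff (a)~$E$ is well-founded, (b)~extensionality holds, and (c)~the single $\LI$-sentence $\forall\alpha\,\bigl(\Cd^{(A,E)}(\alpha)\to\forall x\,E\,\alpha\;\lnot\mathrm{I}yz\,(yEx)(zE\alpha)\bigr)$ holds. Here (b) and~(c) are already $\LI$-sentences, so only well-foundedness needs the $\DDelta$-operator. For the $\PPi$-side one simply adds a unary $P$ and writes ``$P$ has no $E$-minimal element''---no $\omega$-like sort is needed. For the $\SSigma$-side the paper uses \emph{Lindstr\"om's trick}: add a second sort with a relation $R$ and express via $\mathrm{I}$ that the sets $X_a=\{x:R(a,x)\}$ satisfy $aEb\Rightarrow|X_a|<|X_b|$. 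Well-foundedness is then immediate from the well-ordering of real cardinals.

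Your scheme has two genuine gaps. First, clause~(ii) does \emph{not} identify $C$ with $\Cd^M$: the H\"artig condition ``no $yEx$ is $\mathrm{I}$-equipollent with $x$'' picks out, after collapsing, exactly those ordinals that are cardinals in $V$, so $C$ corresponds to $\Cd^V\cap M$, not to $\Cd^M$. Since $\Cd^V\cap M\subseteq\Cd^M$ holds automatically, nothing in your axioms forces the reverse inclusion; a transitive $M$ that is well-founded but not $\Cd$-correct (e.g.\ an $L_\delta$ with $\omega_1^L<\omega_1^V$ and $\delta$ a limit of $V$-cardinals) can satisfy all of (i)--(iv). What is missing is precisely the paper's condition~(c), or equivalently the first-order clause $C(x)\leftrightarrow\Cd^{(A,E)}(x)$.

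Second, the step ``$(C,E)$ well-ordered forces $(S,<_S)$ well-ordered'' is left unspecified, and the natural reading---an order-embedding of $S$ into $C$---fails: for $M=V_{\omega+\omega}$ (which \emph{is} $\Cd$-correct) the order type of $C=M\cap\Cd$ is $\omega+1$, while $M$ has $\in$-chains of every length below $\omega+\omega$, so no order-preserving $r:M\to S$ can exist when $S$ embeds in~$C$. What \emph{does} work is to impose directly on $S$ (or on the $E$-extensions in $A$) the H\"artig condition that distinct elements have initial segments of distinct $V$-cardinality; but that is exactly Lindstr\"om's trick, and once you write it down the detour through $C$, (iii), and~(iv) becomes superfluous.
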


\begin{proof} $\;$ \ 

\begin{enumerate}[(1)]
\item It is easy to see that  ``$\A \models_\LI \phi$'' is  absolute for  models of set theory which are $\Cd$-correct. Therefore $\A \models_\LI \phi $

\medskip
  $\;\; \text{ iff }  \exists M \: (M \text{ trans. and $\Cd$-correct}\; \land \; M \models \ZFC^{-*}\; \land \; \A \in M \; \land \;  M \models (\A \models_\LI \phi) )$

\medskip
 $\;\; \text{ iff }  \forall M \: (( M \text{ trans.  and $\Cd$-correct} \land   M \models \ZFC^{-*}   \land  \A \in M)  \; \to $

$\; M \models (\A \models_\LI \phi) )$

\smallskip
Note that ``$M$ is $\Cd$-correct'' is the statement $\forall x \in M \: ( (M \models \Cd(x) ) \leqqq \Cd(x))$ which is $\Delta_1(\Cd)$. Thus the above is a $\Delta_1(\Cd)$ formula.

\item We  need to check that $\Q_\Cd = \{(A,E) \; : \; (A,E)$ is isomorphic to a transitive $\Cd$-correct $\in$-model$\}$ is $\DDelta(\LI)$. We have:

$(A,E) \in \Q_\Cd \text{ iff }$
\begin{enumerate}
\item $E$ is wellfounded
\item $ (A,E) \models \textsc{Ext}$
\item $ (A,E) \models_\LI $ ``$\forall \alpha \: ( \Cd(\alpha) \to \forall x \in \alpha \lnot Iyz(y \in x)(y \in \alpha))$''  (written in $E$ instead of $\in$). \end{enumerate}
Conditions (b) and (c) are $\LI$-sentences, so it remains to show that (a) is $\DDelta(\LI)$. 

\p  First, we add a new unary predicate symbol $P$ and consider the sentence ``$P$ has no $E$-least element'', i.e., $$\phi \equiv \forall x ( P(x) \to \exists y (P(y) \land y E x)).$$ Clearly  the class of all models $(A,E)$ such that $E$ is \emph{not} well-founded is  the projection of $\Mod(\phi)$ to $\{E\}$. Therefore (a) is $\PPi(\LI)$.

\p To show that (a) is also $\SSigma(\LI)$ we  use a  trick due to Per Lindstr\"om \cite{Lindstrom1966}:  $(A,E)$ is well-founded if and only if we can associate    sets $X_a$  to every $a \in A$ in such a way that $a E b \: \to \: |X_a| < |X_b|$. Let the original sort be called $s_0$, extend the language with a second sort $s_1$,  add a new binary relation symbol $R$ from $s_0$ to $s_1$, and consider the sentence
$$\phi \; \equiv \; \forall^0  a \: \forall^0 b \: ( a E b \: \to \: ( \forall^1 x (R(a,x) \to R(b,x)) \;\land \; \lnot I y z R(a,y) R(b,z) ) ) $$
where we have used $\forall^0$ and $\forall^1$ to denote quantification over the two sorts. 

\p Now we can easily see that if $\mathcal{A} = (A, X, E^\A, R^\A) \models \phi$, then the sets $X_a :=  \{x \in X :  R^\A(a,x)\}$ are exactly as required, hence $E^\M$ is well-founded. Conversely, if $(A,E^\A)$ is well-founded, let $\rk_E: A \to \Ord$ be the rank function induced by $E^\A$,  let $X := \aleph_{\sup_{a \in A}\{\rk_E(a)+1\}}$, and define $R^\A \subseteq A \times X$  by $R^\A(a,\alpha) \; \Leftrightarrow \; \alpha < \aleph_{\rk_E(a)}$. Then $(A,X,E^\A,R^\A) \models \phi$.
 
 \p Thus we conclude that the class of well-founded structures is the projection of $\Mod(\phi)$ to $s_0$, completing the proof. \qedhere

\end{enumerate}
\end{proof}

\begin{proposition}[V\"a\"an\"anen \cite{Jouko1979}] \label{symbiosis3} $\L^2$ and $\PwSt$ are symbiotic. \end{proposition}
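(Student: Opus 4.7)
My plan is to follow the pattern of Propositions \ref{symbiosis1} and \ref{symbiosis2}, verifying the two conditions of Definition \ref{def:symbiosis}. For condition (1), the crucial point is that $\models_{\L^2}$ is absolute between $V$ and any transitive $\PwSt$-correct model $M \models \ZFC^{-*}$ containing $\A$: since second-order satisfaction requires access to $\Pow(A)$, and $\PwSt$-correctness guarantees that $M$ has the \emph{true} power set of $A$ (provided $\ZFC^{-*}$ is strong enough for $M$ to believe $\Pow(A)$ exists), $M$'s computation of $\A \models_{\L^2} \phi$ agrees with $V$'s. The standard $\Sigma_1$/$\Pi_1$ equivalence for absolute predicates, exactly as in the proofs of Propositions \ref{symbiosis1} and \ref{symbiosis2}, then gives a $\Delta_1(\PwSt)$-definition, using the observation (as there) that ``$M$ is $\PwSt$-correct'' is itself $\Delta_1(\PwSt)$.

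For condition (2), by Lemma \ref{QR} it suffices to show that $\Q_\PwSt$ is $\DDelta(\L^2)$-axiomatizable. By Mostowski collapse, $(A,E) \in \Q_\PwSt$ iff: (a) $E$ is well-founded; (b) $(A,E) \models \textsc{Ext}$; and (c) whenever $b$ is ``the power set of $a$'' according to the model, every subset of $a$'s $E$-extension is represented by some element of $A$. Unlike the $\LI$ case of Proposition \ref{symbiosis2}, well-foundedness is directly $\L^2$-expressible by
$$\forall X (\exists y\, X(y) \to \exists y (X(y) \land \forall z (zEy \to \lnot X(z)))),$$
so no Lindstr\"om trick is needed. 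Condition (b) is first-order. For (c), letting $\Phi_\Pow(a,b) \equiv \forall c (cEb \leftrightarrow \forall z (zEc \to zEa))$, the $\L^2$-sentence
$$\forall a\, \forall b \Bigl(\Phi_\Pow(a,b) \to \forall X \bigl(\forall z (X(z) \to zEa) \to \exists c\, \forall z (zEc \leftrightarrow X(z))\bigr)\Bigr)$$
expresses (c) directly. Hence $\Q_\PwSt$ is in fact $\L^2$-axiomatizable, and a fortiori $\DDelta(\L^2)$-axiomatizable.

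The one delicate point is verifying that (c) is a faithful translation of $\PwSt$-correctness. The key observation is that for any transitive $M$, $V \models \PwSt(x,y)$ automatically implies $M \models \PwSt(x,y)$, so $\PwSt$-correctness can only fail in the converse direction: when $M$ believes some $b$ is $\Pow(a)$ while $V$ contains subsets of $a$ missing from $b$. The $\L^2$-sentence above rules this out exactly, since full second-order quantification over subsets of $A$ is precisely what is needed to demand internal representation of every element of $\Pow^V(\{z : zEa\})$. This makes the present proof more direct than the $\Cd$/$\LI$ case: the strength of second-order logic matches the strength of $\PwSt$ without any auxiliary encoding.
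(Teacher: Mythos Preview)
Your proof is correct and follows essentially the same approach as the paper's: for (1) you invoke absoluteness of $\models_{\L^2}$ for transitive $\PwSt$-correct models exactly as the paper does, and for (2) you show $\Q_\PwSt$ is outright $\L^2$-axiomatisable (well-foundedness, extensionality, and $\PwSt$-correctness all written as $\L^2$-sentences), which is precisely the paper's argument. Your formulation of the correctness clause is phrased as a one-directional implication (``if $b$ is the internal power set of $a$ then every true subset of $a$ is represented''), whereas the paper writes a biconditional $y = \Pow(x) \leftrightarrow \Phi(x,y)$ with $\Phi$ the $\L^2$-definition of the true power set; your observation that the other direction is automatic for transitive models justifies why these amount to the same thing, and your version is arguably cleaner.
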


\begin{proof}  \begin{enumerate}[(1)]

\item As before, note that the statement `` $\A \models_{\L^2} \phi$'' is absolute for transitive models which are $\PwSt$-correct, so we have  $\A \models_{\L^2} \phi$

\medskip
$\;\;\;\; \text{ iff }  \exists M \: (M \text{ trans. and $\PwSt$-correct} \; \land \;  M \models \ZFC^{-*} \; \land \; \A \in M \; \land \;  M \models (\A \models_\LI \phi) )$

\medskip
$\;\;\;\; \text{ iff }  \forall M \: (( M \text{ trans.  and $\PwSt$-correct}  \; \land \;   M \models \ZFC^{-*} \; \land \;  \A \in M  )  \; \to \; M \models (\A \models_\LI \phi) )$




\item Consider $\Q_\PwSt = \{(A,E)  :   (A,E)$ is isomorphic to a $\PwSt$-correct $\in$-model$\}$.  We have $(A,E) \in \Q_\PwSt$ iff $E$ is wellfounded and extensional and $(A,E) \models_{\L^2} ( y = \Pow(x) \: \leqqq \; \Phi(x,y))$ where $\Phi(x,y)$ is the $\L^2$-formula expressing that $y$ is the \emph{true} power set of $x$, written using $E$ instead of $\in$.\footnote{ If we use superscripts $0$ and $1$ to denote first- and second-order quantification, and  the relation symbols $\in^{00}$ and $\in^{01}$ to denote sets-in-sets membership and sets-in-classes membership, respectively, the sentence $\Phi(x,y)$ can be written as follows:
$$ \forall^1 Z (  \exists^0 v (v \in^{00} y \land \forall^0 w \: (w \in^{00} v \leqqq w \in^{01} Z)) \;\; \leqqq \;\; \forall^0 v( v \in^{01} Z \to v \in^{00} y) )$$}  All of this is expressible in $\L^2$.  \qedhere

\end{enumerate}
\end{proof}

\section{Bounded Symbiosis} \label{SecBoundedSymbiosis}

Although \emph{symbiosis} is stated as a property of  $\L^{*}$, it is really a property of $\DDelta(\L^{*})$. For many applications, this is irrelevant: for example, the \emph{downwards} L\"owenheim-Skolem principles are all preserved by the $\DDelta$-operation. However, in \cite[Theorem 4.1]{VaananenBounded} it was shown that the Hanf-number may not be preserved, and the  \emph{bounded} $\DDelta$-operation was introduced as a closely related operation  which still fulfills most of the properties  but, in addition, preserves Hanf-numbers. The bounded $\DDelta$ coincides with the original $\DDelta$ in many but not all cases, see \cite{VaananenBounded}.

\bigskip If we want to apply \emph{symbiosis} to upwards L\"owenheim-Skolem principles,  we need to accommodate this bounded version of the $\DDelta$-operation. Unfortunately, this also requires adapting the set-theoretic complexity classes to bounded versions.  This section is devoted to the definition of these concepts. We start with the model-theoretic side. The following definition generalizes Definition \ref{DefDelta} and was first introduced in \cite[p.\,45]{VaananenBounded}.

\begin{definition}  \label{bounded} A class $\K$ of $\tau$-structures is \emph{$\SSigma^{\mathrm{B}}(\L^{*})$-axiomatisable} if there exists $\phi$ in an extended vocabulary $\tau' \supseteq \tau$ such that  $$\K = \{\A \,: \, \exists \B \; ( \B \models \phi  \text{ and }  \A = \B {\till} \tau )\},$$  and for all $\A$ there exists a cardinal $\lambda_\A$, such that for any  $\tau'$-structure $\B$: if $\B \models \phi$ and $\A = \B {\till} \tau$ then $|\B| \leq \lambda_\A$.   

\p We say that $\K$ is a \emph{bounded projection} of $\Mod(\phi)$. $\K$ is \emph{$\DDelta^{\mathrm{B}}(\L^{*})$-axiomatisable} if both $\K$ and its complement  are $\SSigma^{\mathrm{B}}(\L^{*})$-axiomatisable. \end{definition}

In other words, $\K$ is a {bounded projection} of  $\Mod(\phi)$ if it is a projection and, in addition, every structure $\B \in \Mod(\phi)$ is bounded in its cardinality by a function that depends on the respective reduct $\B \till \tau$. Note that this definition really only plays a role when the extended vocabulary has additional sorts, since otherwise the cardinalities of $\B {\till} \tau$ and $\B$ are  the  same.  

Typical examples of  bounded projection will be seen, e.g.,  in Propositions \ref{boundedsymbiosis1}, \ref{nontrivial} and \ref{boundedsymbiosis3}.

It will be useful to define a bound given by a function from ordinals to ordinals rather than models to ordinals.

\begin{lemma}\label{Lemma:FunctionHBounded}
Suppose $\K$ is $\SSigma^{\mathrm{B}}(\L^{*})$-axiomatisable. Then there  exists $\phi$ in an extended vocabulary $\tau' \supseteq \tau$, and a non-decreasing function $h: \Ord\rightarrow \Ord$ such that  
 $$\K = \{\A \,: \, \exists \B \; ( \B \models \phi  \text{ and }  \A = \B {\till} \tau )\}$$
 and 
$$\forall  \B (\B \models \phi   \;\to\; |\B|\leq h(|\B\till \tau|)).$$
\end{lemma}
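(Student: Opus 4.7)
The plan is to keep the same sentence $\phi$ and extended vocabulary $\tau' \supseteq \tau$ that witness the $\SSigma^{\mathrm{B}}(\L^*)$-axiomatisability of $\K$, and only replace the model-indexed bound $\A \mapsto \lambda_\A$ by a genuinely ordinal-indexed function $h$. No syntactic modification of $\phi$ is needed; the task is purely to exhibit the function $h$ and check monotonicity and the pointwise bound.

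First I would make the bound isomorphism-invariant. Since abstract logics are closed under isomorphism, any isomorphism $\A \cong \A'$ lifts to an isomorphism between the corresponding expansions witnessing $\A, \A' \in \K$, and so the values $|\B|$ that appear as $\B$ ranges over $\tau'$-expansions of $\A$ (resp.\ $\A'$) in $\Mod(\phi)$ are the same. I can therefore redefine, for $\A \in \K$,
$$\lambda_\A \;:=\; \sup\{\, |\B| \;:\; \B \models \phi \text{ and } \B \till \tau \cong \A \,\},$$
which is a cardinal by the hypothesis of Definition \ref{bounded}, and which depends only on the isomorphism type of $\A$. Next, for every ordinal $\alpha$ I would set
$$h(\alpha) \;:=\; \sup\{\, \lambda_\A \;:\; \A \in \K \text{ and } |A| \leq \alpha \,\},$$
where $\A$ ranges over a system of representatives of isomorphism types. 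Monotonicity of $h$ is immediate from the definition. The pointwise bound is also immediate: if $\B \models \phi$ and $\A := \B \till \tau$, then $\A \in \K$, so $|\B| \leq \lambda_\A \leq h(|A|) = h(|\B \till \tau|)$.

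The only delicate point, and the step I expect to be the main obstacle, is verifying that $h(\alpha)$ is actually an ordinal rather than a proper-class supremum. This is where the isomorphism-invariance of $\lambda_\A$ matters: for each cardinal $\kappa$ the collection of isomorphism types of $\tau$-structures of cardinality $\leq \kappa$ is a set (of size at most $2^{\kappa + |\tau|}$, using that $\tau$ is tacitly a set), so the class over which we take the supremum is represented by a set and its image under $\A \mapsto \lambda_\A$ is a set of cardinals whose supremum is an ordinal. Once this set-theoretic point is settled, all the required conclusions follow directly from the definitions.
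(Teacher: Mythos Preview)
Your proposal is correct and follows essentially the same approach as the paper: define $h(\lambda) := \sup\{\lambda_\A : |\A| \leq \lambda\}$ and observe that the supremum is taken over a set because there are only set-many isomorphism types of $\tau$-structures of cardinality at most $\lambda$. Your extra care in making $\lambda_\A$ isomorphism-invariant is not strictly necessary (one can simply take the sup over all $\A$ with $|A|\leq\lambda$ of the original bounds, which is still a set-indexed sup by the same counting argument), but it does no harm.
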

\begin{proof}  Define $h$ by $h(\lambda) := \sup\{\lambda_{\A}\,: \, |\A| \leq \lambda \}$ where each $\lambda_\A$ is as in Definitions \ref{bounded}. Since there are only set-many non-isomorphic models of any cardinality, $h$ is well-defined. \end{proof}


\bigskip
Now we move to the set-theoretic side of things, which is more involved. 
In particular, we may no longer refer to arbitrary $\Sigma_1$-formulas, since   the witness in such formula may be unbounded, making it impossible to establish a \emph{symbiotic} relationship for bounded projective classes.  So we would like to restrict attention to formulas $\phi(x)$ of the form $\exists y \: \psi(x,y)$ but where, in addition, the (hereditary) size of at least one witness $y$ is bounded by a function of the (hereditary) size of $x$, and this function itself can  be ``captured'' by first-order logic. Note that a similar concept was introduced by the third author in  \cite[Definition 3.1]{Jouko1979}.

We first need to introduce the concept of ``being captured by first-order logic''.



\begin{definition} \label{definablybounding}
A non-decreasing class function $F: {\rm Card} \rightarrow {\rm Card}$ 
is called \emph{definably bounding} if the class of structures

$$\K:=\{(A,B)\, : \, |B| \leq F(|A|)\}$$

\noindent (in the vocabulary with  two sorts and no  symbols) is $\SSigma^{\mathrm{B}}(\L_{\omega\omega})$-axiomatisable.
\end{definition}

The intuition here is that the size of $|B|$ (the ``witness'') may be larger than $|A|$, but not by too much --- and by exactly how much is determined by $F$. For example, the identity function $F = $ id is definably bounding since we can always extend the language with a new function symbol $f$ between the two sorts and express   ``$f$ is a surjection'' in $\Lww$. A more interesting  example is the following:

\begin{Example} \label{alephbound} The function $F(\kappa)  = 2^\kappa$ is definably bounding.

\end{Example}

\begin{proof} Consider the class $\K:=\{(A,B)\, : \, |B| \leq 2^{|A|} \}$. Extend the vocabulary with a new relation symbol $E$ between the two sorts in reverse order, and consider the first-order formula 

$$\phi \; \equiv\; \forall^B b,b' (\forall^A a  (a E b  \leftrightarrow a E b') \rightarrow b=b'),$$
where we used the notation $\forall^A$ and $\forall^B$ to informally refer to quantification over the sorts. It is easy to see that if $\M = (A,B,E^\M) \models \phi$ then the map $i:B \to \Pow(A)$ given by $i(b) := \{a \in A \; : \; a E^\M b\}$ is injective, so $|B| \leq |\Pow(A)|$. It follows that $\K$ is the projection of $\Mod(\phi)$. The ``bounded'' part is immediate since we have not added new sorts.
\end{proof}

By an additional argument (see \cite[Lemma 6.23]{GaleottiThesis}), it is not hard to prove that if $F$ is definably bounding, then so is any iteration  $F^n$. In particular, if we define the cardinal function $\beth_n$ for infinite cardinals $\lambda$ by setting $\beth_0(\lambda) := 2^\lambda$ and $\beth_{n+1}(\lambda) := 2^{\beth_n(\lambda)}$, 
 then each such $\beth_n$ is also definably bounding. This is typically strong enough for most interesting applications.


\begin{definition} $\;$

 \begin{enumerate}

\item For a set $x$, the \emph{$\He$-rank} of $x$, denoted by $\rho_\He(x)$, is the least infinite $\kappa$ such that $x \in \He_{\kappa^+}$ (i.e., $\rho_\He(\kappa) = \min(\aleph_0, |\trcl(x)|)$).

\item Let $F$ be a definably bounding function. A set-theoretic formula $\phi(x)$ is $\Sigma_1^F$ if there exists a $\Delta_0$ formula $\psi(x,y)$  such that  \begin{enumerate}
\item $\forall x \; (\phi(x) \: \leqqq \: \exists y \: \psi(x,y))$, and
\item $\forall x \;(\phi(x) \to \exists y' ( \rho_\He(y') \leq F(\rho_\He(x)) \;\;  \land \;\;  \psi(x,y') )$ \end{enumerate} 


 A formula is $\Pi_1^F$ if its negation is $\Sigma_1^F$, and $\Delta_1^F$ if it is equivalent to both a $\Sigma_1^F$- and a $\Pi_1^F$-formula. 
\item Let $R$ be a predicate in the language of set theory. All of the above can be generalized to $\Sigma_1^F(R)$, $\Pi_1^F(R)$ and $\Delta_1^F(R)$ in the obvious way. 

\end{enumerate}
\end{definition}

So, a  $\Sigma_1^F$ formula is a $\Sigma_1$ formula such that, in addition, at least one   ``witness'' $y$  is not too far up in terms of $\He$-rank in relation to  $x$ itself, where by ``not too far up'' we mean ``bounded by the definably bounding function $F$''. An important example is the satisfaction relation of first-order logic:

\begin{Remark} \label{remarklww} The satisfaction relation $\models_\Lww$ is $\Delta_1^\id$ (see \cite{BarwiseAdmissibleSets}).
\end{Remark}

This leads us to introduce a new notion of symbiosis. A similar idea already appeared in   \cite[Definition 3.3]{Jouko1979}.

\begin{definition}[\textbf{Bounded Symbiosis}] Let $\L^{*}$ be a logic and $R$ a set theoretic predicate. We say that $\L^{*}$ and $R$ are \emph{boundedly symbiotic} if \begin{enumerate}[(1)]
\item 
The  relation $\models_{\L^{*}}$ is $\Delta^F_1(R)$-definable for some definably bounding $F$.
\item  Every $\Delta_1^F(R)$-definable class of $\tau$-structures closed under isomorphisms is $\DDelta^{\rm B}(\L^*)$-axiomatizable (for every definably bounding $F$).
\end{enumerate} \end{definition}
 
Just as before, condition (2) of \emph{bounded symbiosis} has an equivalent form which is usually easier to verify and to apply. A side effect will be that in (2), we may assume that $F = \id$ without loss of generality. First, we need the following:

\begin{lemma} \label{Cor:ABSPI1} \label{lemmichka}  Let $R$ be a $\Pi_1$ predicate in set theory. \begin{enumerate}
\item Every $\He_\kappa$ is $R$-correct.
\item Let    $\phi$ be a $\Sigma_1^{{F}}(R)$-formula.  Then for every $x$ and every $\kappa > F(\rho_\He(x))$,  $\phi(x)$ is absolute (upwards and downwards) for  $\He_{\kappa}$. 
\end{enumerate}

\end{lemma}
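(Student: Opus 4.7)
My plan is to handle the two parts in sequence, with Part~(1) providing the ambient absoluteness needed for Part~(2). For Part~(1), I would write $R(\bar x) \equiv \forall y\,\psi(\bar x, y)$ with $\psi$ a $\Delta_0$ formula and fix $\bar x \in \He_\kappa$. The implication $R(\bar x) \Rightarrow \He_\kappa \models R(\bar x)$ is immediate from transitivity of $\He_\kappa$ and $\Delta_0$-absoluteness, since any $y \in \He_\kappa$ satisfies $\He_\kappa \models \psi(\bar x, y)$ iff $\psi(\bar x, y)$ holds in $V$. For the converse, one argues that a $V$-witness $y_0$ to $\neg\psi(\bar x, y_0)$, if one exists, has hereditary cardinality bounded in terms of $\rho_\He(\bar x)$ — a structural feature of the $\Pi_1$ predicates relevant here, as illustrated by the analyses for $\Cd$ and $\PwSt$ given in the previous section — and therefore lies in $\He_\kappa$, contradicting $\He_\kappa \models R(\bar x)$.

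For Part~(2), write $\phi(x) \equiv \exists y\,\psi(x, y)$ with $\psi$ a $\Delta_0(R)$-formula, where the $\Sigma_1^F(R)$-structure guarantees that whenever $\phi(x)$ holds there is a witness $y$ satisfying $\rho_\He(y) \leq F(\rho_\He(x))$. Fix $\kappa > F(\rho_\He(x))$. For downward absoluteness, choose such a small witness, so that $y \in \He_\kappa$; since $\He_\kappa$ is $R$-correct by Part~(1), every $\Delta_0(R)$-formula is absolute between $\He_\kappa$ and $V$ (the bounded quantifiers are interpreted identically because $\He_\kappa$ is transitive, and each occurrence of $R$ is evaluated correctly), giving $\He_\kappa \models \psi(x,y)$ and hence $\He_\kappa \models \phi(x)$. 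For upward absoluteness, any $y \in \He_\kappa$ with $\He_\kappa \models \psi(x,y)$ also satisfies $\psi(x,y)$ in $V$ by the same $\Delta_0(R)$-absoluteness, so $V \models \phi(x)$.

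The main obstacle is the upward direction of Part~(1): justifying that the $\Pi_1$ form of $R$ suffices to confine the witnesses to the $\Sigma_1$-complement inside $\He_\kappa$. Once this is in hand, Part~(2) follows routinely by combining the $\Sigma_1^F(R)$-bound on the existential witness (which forces it into $\He_\kappa$) with the $\Delta_0(R)$-absoluteness supplied by Part~(1).
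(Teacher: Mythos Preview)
Your treatment of Part~(2) is correct and matches the paper's argument: use the $\Sigma_1^F(R)$ bound to place a witness inside $\He_\kappa$, then invoke $\Delta_0(R)$-absoluteness (which follows from Part~(1) and transitivity). The paper does exactly this.

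The gap is in Part~(1), and it is precisely the gap you yourself flag as ``the main obstacle''. The upward direction --- that $\He_\kappa \models R(\bar x)$ implies $V \models R(\bar x)$ --- is the content of L\'evy's $\Sigma_1$-reflection theorem for $\He_\kappa$: for any $\Sigma_1$ formula $\theta$ and $\bar x \in \He_\kappa$, if $V \models \theta(\bar x)$ then $\He_\kappa \models \theta(\bar x)$ (apply this to $\theta \equiv \lnot R$). The proof is a L\"owenheim--Skolem plus Mostowski collapse argument: given a $V$-witness $y_0$ to $\lnot\psi(\bar x, y_0)$, take an elementary substructure of a large enough transitive set that contains $\trcl(\{\bar x\}) \cup \{y_0\}$ and has cardinality $\le \rho_\He(\bar x)$, and collapse; since $\trcl(\{\bar x\})$ is pointwise fixed, the image of $y_0$ is a witness lying in $\He_\kappa$. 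This is a \emph{uniform} argument for arbitrary $\Pi_1$ predicates $R$, which is why the paper simply cites L\'evy.

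Your proposal instead appeals to ``a structural feature of the $\Pi_1$ predicates relevant here, as illustrated by the analyses for $\Cd$ and $\PwSt$ given in the previous section''. This is doubly misleading. First, those symbiosis proofs do not \emph{establish} $R$-correctness of $\He_\kappa$; they \emph{use} it. Second, phrasing it as a special feature of particular predicates obscures the point: the witness-shrinking is not ad hoc but is L\'evy's theorem in full generality. As written, you have correctly located the obstacle but not overcome it; what is missing is exactly the L\"owenheim--Skolem/collapse step (or a citation to L\'evy).
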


\begin{proof} 1 is a classical result of  of L\`evy \cite{Levy} (see also \cite[Lemma 6.27]{GaleottiThesis}). From this, it follows that $\Delta_0(R)$-formulas are  absolute for $\He_\kappa$. 

\p For 2, it suffices to prove downwards absoluteness. Let $x$ be arbitrary and suppose $\phi(x)$ holds. Then there exists $y$ such that $\rho_\He(y) \leq F(\rho_\He(x)) < \kappa$ and   $\psi(x,y)$ holds, where   $\psi(x,y)$ is  the corresponding $\Delta_0(R)$-formula.  But then  $y \in \He_{\kappa}$ and $\He_{\kappa} \models \psi(x,y)$ by the above. It follows that  $\He_{\kappa} \models \phi(x)$. \end{proof}

\begin{lemma}\label{Lemma:EqBoundDef} \label{lemmacombi}
Let $\L^{*}$ be a logic and $R$ be a $\Pi_1$ predicate. Then the following are equivalent:
\begin{enumerate}[(a)]
\item Every $\Delta^{{F}}_1(R)$ class of $\tau$-structures closed under isomorphisms is  $\DDelta^{{\rm B}}(\L^{*})$-axiomatisable (for every definably bounding $F$).
\item Every $\Delta^{{\id}}_1(R)$ class of $\tau$-structures closed under isomorphisms is  $\DDelta^{{\rm B}}(\L^{*})$-axiomatisable.


\item The class ${\Q}_R := \{\A   \,: \, \A$ is isomorphic to a transitive $R$-correct $\in$-model$\}$ is $\DDelta^{{B}}(\L^{*})$-axiomatisable.

\end{enumerate} 
\end{lemma}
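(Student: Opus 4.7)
The plan is to prove the cycle (a) $\Rightarrow$ (b) $\Rightarrow$ (c) $\Rightarrow$ (a), mimicking the strategy of the unbounded Lemma \ref{QR} but tracking hereditary-rank bounds throughout. The implication (a) $\Rightarrow$ (b) is immediate, since the identity function is trivially definably bounding, so $\Delta^{\id}_1(R)$-classes are already in the scope of (a).

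For (b) $\Rightarrow$ (c), I would verify that $\Q_R$ is itself $\Delta_1^{\id}(R)$-definable, so that (b) applies directly to yield (c). A structure $\A = (A, E)$ belongs to $\Q_R$ precisely when $E$ is well-founded, $E$ is extensional, and the Mostowski collapse of $\A$ is $R$-correct. For the $\Sigma_1^{\id}(R)$-form, the witness is the pair consisting of the collapse $M$ together with the collapse map $i : A \to M$; since $M$ is transitive with $|M| = |A|$, one gets $\rho_\He(M), \rho_\He(i) \leq \rho_\He(\A)$. For the $\Pi_1^{\id}(R)$-form, failure of membership in $\Q_R$ is witnessed by an infinite descending $E$-chain, a pair violating extensionality, or a tuple in the collapse showing that $R$ is misidentified; since $R$ is $\Pi_1$, Lemma \ref{Cor:ABSPI1} guarantees that any such failure is already detectable inside $\He_{\kappa}$ for $\kappa$ just above $\rho_\He(\A)$, keeping every witness $\He$-bounded by $\id(\rho_\He(\A))$.

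The main work lies in (c) $\Rightarrow$ (a). Given $\K$ a $\Delta_1^F(R)$-definable isomorphism-closed class of $\tau$-structures with $\Sigma_1^F(R)$-definition $\exists y\,(\rho_\He(y) \leq F(\rho_\He(x)) \land \psi(x, y))$ for some $\Delta_0(R)$-formula $\psi$, I would extend $\tau$ to $\tau'$ with a new sort for an auxiliary set-theoretic universe $U$ equipped with $\in$, a distinguished element coding a witness $y \in U$, and a faithful encoding of the $\tau$-structure $\A$ inside $U$. The axiomatization consists of four conjuncts: (1) $(U, \in) \in \Q_R$, using the $\DDelta^B(\L^*)$-axiomatization supplied by (c); (2) the encoding of $\A$ into $U$ is faithful; (3) $U$ satisfies $\psi(\A, y)$, which is first-order since $\psi \in \Delta_0(R)$; and (4) $|U| \leq G(|\A|)$ for some definably bounding $G$ derived from $F$, expressible in $\L_{\omega\omega}$ via Definition \ref{definablybounding} together with Lemma \ref{Lemma:FunctionHBounded}. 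By Lemma \ref{Cor:ABSPI1}, since $R$ is $\Pi_1$, the $\Sigma_1^F(R)$-statement is absolute between $V$ and any sufficiently large transitive $R$-correct set, so a well-chosen $U$ of cardinality bounded by $G(|\A|)$ faithfully witnesses $\A \in \K$. Clause (4) is precisely what upgrades the construction from $\SSigma(\L^*)$ to the bounded variant $\SSigma^B(\L^*)$. Applying the symmetric argument to the $\Pi_1^F(R)$-definition of $\K^c$ yields the $\PPi^B(\L^*)$-part and completes the $\DDelta^B(\L^*)$-axiomatization.

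The main obstacle is calibrating $G$: the auxiliary universe $U$ must contain both (a copy of) $\A$ and a witness $y$ with $\rho_\He(y) \leq F(\rho_\He(\A))$, and must itself be transitive, so $|U|$ is necessarily bounded by an iterate of $F$ composed with cardinal exponentiation. The observation that iterates of definably bounding functions remain definably bounding (as noted just after Example \ref{alephbound}) is precisely what makes this bound expressible in $\L_{\omega\omega}$, and is where the \emph{bounded} variant $\DDelta^B$ is used in an essential way rather than the unbounded $\DDelta$ of Lemma \ref{QR}.
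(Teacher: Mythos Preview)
Your proposal is correct and follows essentially the same route as the paper: the cycle $(a)\Rightarrow(b)\Rightarrow(c)\Rightarrow(a)$, with $(b)\Rightarrow(c)$ handled by checking that $\Q_R$ is $\Delta_1^{\id}(R)$ via the Mostowski collapse and its inverse as bounded witnesses, and $(c)\Rightarrow(a)$ by adjoining an auxiliary set-theoretic sort, placing it in $\Q_R$, encoding $\A$ inside it, and imposing a cardinality bound to secure boundedness of the projection. The paper makes the bound explicit as $|M|\leq 2^{2^{F(|A|)}}$ and writes the reflected statement as $\M\models\Phi(c)$ rather than separating out the witness $y$, but these are presentational differences only.
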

\begin{proof} $(a) \Rightarrow (b)$ is immediate. For $(b) \Rightarrow (c)$, it is enough to prove that $\Q_R$ itself is  $\Delta^{\id}_1(R)$-definable. We have $\A\in \Q_R$ iff $\exists M \: \exists f$ such that

\begin{enumerate}
\item $\rho_\He(M) \leq \rho_\He(\A)$
\item $\rho_\He(f) \leq \rho_\He(\A)$
\item $M$ is transitive
\item $f: \A = (A,E) \cong (M, \in)$ is an isomorphism
\item $\forall x_1 \dots x_n \in M \; (M \models R(x_1, \dots, x_n) \;\leqqq\; R(x_1, \dots, x_n))$
\end{enumerate}

\p Since clauses 3--5 are $\Delta_1(R)$, this gives a  $\Sigma^\id_1(R)$ statement. Similarly,  $\A \notin \Q_R$ iff $(A,E)$ is not well-founded or  $\exists M \: \exists f$ such that

\begin{enumerate}
\item $\rho_\He(M) \leq \rho_\He(\A)$ 
\item $\rho_\He(f) \leq \rho_\He(\A)$
\item $M$ is transitive
\item $f: \A = (A,E) \cong (M, \in)$ is an isomorphism
\item $\lnot \forall x_1 \dots x_n \in M \; (M \models R(x_1, \dots, x_n) \;\leqqq\; R(x_1, \dots, x_n))$
\end{enumerate}

\p It is easy to see that being ill-founded is $\Sigma_1^{\id}$, so the conjunction is again a $\Sigma^{\id}_1(R)$ statement.

\bigskip \noindent Now we look at $(c) \Rightarrow (a)$.  Let $\K$ be a $\Delta^{F}_1(R)$-definable class over a fixed vocabulary $\tau$ which is closed under isomorphisms. Let $\Phi(x)$ be the $\Sigma^{F}_1(R)$ formula defining $\K$. For simplicity, assume that $\tau$ consists only of one binary predicate $P$ and only one sort.

\p Let $\tau'$ be a language in two sorts $s_0$ and $s_1$, with $E$ a binary relation symbol of sort  $s_0$, $G$ a function symbol from $s_1$ to $s_0$, $c$ a constant symbol of sort $s_0$, and $P$ a unary predicate symbol in $s_1$ (i.e., $s_1$ is the original sort of $\tau$, while $s_0$  adds a ``model of set theory'' on the side). 

\p Let $\K'$ be the class of all $\tau'$-structures 
$$\mathcal{M} := \left(M,A,E^\M,c^\M, G^\M, P^\M\right)$$
satisfying the following conditions: \begin{enumerate}
\item \label{Lem:EqBSBpoint2} $(M,E^\M)\in {\Q}_R$, i.e., $(M,E^\M)$ is isomorphic to a transitive $R$-correct model
\item \label{Lem:EqBSBpoint1} $(M,E^\M)\models \ZFC^{-*}$ 
\item \label{Lem:EqBSBpoint3} $\mathcal{M} \models \Phi(c)$
\item \label{Lem:EqBSBpoint4} $|M|\leq 2^{2^{F(|A|)}}$
\item \label{Lem:EqBSBpoint5} $\mathcal{M} \models \text{\vir {$c=(a,b)$ and $b\subseteq a\times a$}}$ (written using $E$ instead of $\in$)
\item \label{Lem:EqBSBpoint6} $\mathcal{M} \models $ ``$G$ is an isomorphism between $(A,P)$ and $(a,b)^{(M,E)}$''. In this sentence, $(a,b)^{(M,E)}$ refers to the domain and binary relation on it which is described by $a$ and $b$ when interpreting $\in$ by $E^\M$ (e.g., the domain is really $\{x \in M \; : \;   x E^\M a\}$ etc. )

\end{enumerate}
Now we can see that conditions 2, 3, 5 and 6 are directly expressible in $\Lww$, while 1 is $\DDelta^{\mathrm{B}}(\L^{*})$-axiomatisable, and hence $\SSigma^{\mathrm{B}}(\L^{*})$-axiomatisable, by assumption. Moreover,  4 is also $\SSigma^{\mathrm{B}}(\L^{*})$-axiomatisable: this follows by the definition of ``definably bounding'', by Example \ref{alephbound}, and the discussion following it.

\p It remains to prove that $\K$ is a bounded projection of $\K'$ to $\tau$. Note that the ``bounded'' part is immediate due to 4. \begin{itemize}

\item First suppose  $\M =  \left(M,A,E^\M,c^\M, G^\M, P^\M\right)\in \K'$. We want to show that $(A, P^\M) \in \K$. By \ref{Lem:EqBSBpoint2} $(M,E^\M)$ is isomorphic to a transitive model $(\overline{M},\in)$ which is $R$-correct. Let $\overline{c^{\M}}$ be the image of $c^\M$ under this isomorphism. Then $(\overline{M},\in )\models \Phi(\overline{c^{\M}})$. Moreover, since $\overline{M}$ is $R$-correct and  $\Phi$ is $\Sigma_1(R)$, by upwards absoluteness we have $\Phi(\overline{c^{\M}})$, i.e. $\overline{c^\M}\in \K$. By \ref{Lem:EqBSBpoint6}  we have $\overline{c^{\M}}\cong \text{``}(a,b)^{(M,E)}\text{''}\cong (A,P^\M)$. Since $\K$ is closed under isomorphism, it follows that  $(A,P^\M)\in \K$. 
 
 \item Conversely, let $\A = (A,P^\A) \in \K$, i.e., $\Phi(\A)$ holds. We want to find a structure $\M\in \K'$ such that $\A=\M{\upharpoonright} \tau$. 
 
 \p The first  idea would be to find an  $\He_\theta$ which is sufficiently large to reflect $\Phi(\A)$  while still being small enough to satisfy condition (4). In general, however, the  transitive closure of $A$ might be significantly larger than $|A|$. So we first find a model $\bar{\A}$   which is isomorphic to $\A$ but whose domain is some cardinal $\mu$. Since $\K$ is closed under isomorphisms, $\bar{\A}$ is also in $\K$, i.e. $\Phi(\bar{A})$ also holds.
 
 \p   Note that in this case $P^{\bar{\A}} \subseteq \mu \times \mu$, in particular, $\trcl( \bar{\A} ) = \trcl ( (\mu,P^{\bar{\A}}) ) \subseteq \mu$, so $\rho_\He(\bar{\A}) = \mu$.\footnote{Even though we made an assumption to only consider the language $\tau$ with one binary relation symbol for the sake of clarity,  the same holds for any number of predicate or function symbols on a model with domain $\mu$.} Let $\theta := F(\mu)^+$. By Lemma \ref{lemmichka} (2) $\He_\theta \models \Phi(\bar{\A})$.
 
 \p Now let $\M = (\He_\theta, A, \in, \bar{\A}, g, P^\A)$, where $g$ is the isomorphism between $\A$ and $\bar{\A}$. Now it is not hard  to verify that all 6 conditions in the definition of $\K'$ are satisfied. In particular, 1  holds because of Lemma \ref{lemmichka} (1) and 4  because $$|\He_\theta| \leq 2^\theta = 2^{F(\mu)^+} \leq 2^{2^{F(\mu)}} = 2^{2^{F(|A|)}}.$$ Thus $\M \in \K'$ as we wanted.

\end{itemize}

\p This shows that $\K$ is a bounded projection of $\K'$ and therefore $\K$ is $\SSigma^{\rm B}(\L^*)$. Since $\K$ is also $\Delta_1^F(R)$, the same proof works for the complement of $\K$, showing that $\K$ is $\DDelta^{\rm B}(\L^*)$. \qedhere

\end{proof}

\section{Examples of Bounded Symbiosis} \label{SecExamples}

In general, all the pairs that are proved to be \emph{symbiotic} in \cite[Proposition 4]{Symbiosis} are also \emph{bounded symbiotic.}  For completeness, we now show how the proofs of Propositions \ref{symbiosis1}, \ref{symbiosis2} and \ref{symbiosis3} can be strengthened to prove bounded symbiosis. In particular, Proposition \ref{nontrivial} is a non-trivial result since  by \cite[\S\,4]{VaananenBounded} it is consistent that  $\DDelta(\LI) \neq \DDelta^{\mathrm{B}}(\LI)$. 

\begin{proposition} \label{boundedsymbiosis1} The pairs $\LWF$ and $\varnothing$ are bounded symbiotic. \end{proposition}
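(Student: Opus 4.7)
The plan is to adapt both clauses of Proposition \ref{symbiosis1} to their bounded counterparts, with essentially all the work concentrated in clause (1); clause (2) will come almost for free from what Proposition \ref{symbiosis1} already establishes.

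For clause (1), my strategy is to refine the transitive-model characterization of $\models_\LWF$ from Proposition \ref{symbiosis1} by imposing an explicit size bound on the witness $M$. Specifically, I would verify that $\A \models_\LWF \phi$ iff there exists a transitive $\in$-model $M \models \ZFC^{-*}$ with $\A, \phi \in M$, with $\rho_\He(M) \leq 2^{\rho_\He(\A,\phi)}$, and with $M \models (\A \models_\LWF \phi)$; the dual characterization with a universal quantifier yields the $\Pi_1^F$ form. The existence of a witness of the required bounded rank is handled by taking $M = \He_{\kappa^+}$ where $\kappa = \rho_\He(\A,\phi)$, since $|\He_{\kappa^+}| = 2^\kappa$ and satisfaction of $\mathsf{WF}$ is absolute for transitive models (as in Proposition \ref{symbiosis1}). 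Since the exponential function $F(\kappa)=2^\kappa$ is definably bounding by Example \ref{alephbound}, this produces a $\Delta_1^F$-definition as required.

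For clause (2), I would invoke Lemma \ref{lemmacombi} to reduce the task to showing that $\Q_\varnothing$ is $\DDelta^{\mathrm{B}}(\LWF)$-axiomatizable. The axiomatization from Proposition \ref{symbiosis1} already suffices unchanged: the class $\Q_\varnothing$ is directly $\LWF$-axiomatized by $\textsc{Ext}\;\wedge\;\mathsf{WF}xy(xEy)$, and since $\LWF$ is closed under negation, its complement is directly $\LWF$-axiomatized by the negation of this sentence. Because neither axiomatization introduces auxiliary sorts or symbols, each class is trivially a \emph{bounded} projection of itself (with the identity as bounding function), so both are $\SSigma^{\mathrm{B}}(\LWF)$-axiomatizable and therefore $\Q_\varnothing$ is $\DDelta^{\mathrm{B}}(\LWF)$-axiomatizable.

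The only real subtlety I anticipate is ensuring that $\He_{\kappa^+}$ is genuinely rich enough to absolutely compute $\models_\LWF$ for a formula of hereditary rank at most $\kappa$; but this reduces to the recursive evaluation of satisfaction together with $\Delta_1$-absoluteness of well-foundedness, already established in Proposition \ref{symbiosis1}, combined with the standard fact that $\He_{\kappa^+}$ models a sufficient fragment of $\ZFC^{-*}$ to carry the argument.
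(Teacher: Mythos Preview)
Your proposal is correct. The only noteworthy difference from the paper is in clause~(1): you take $M=\He_{\kappa^+}$ as the witnessing transitive model, which has size $2^\kappa$ and thus yields $\models_\LWF$ as $\Delta_1^F$ for $F(\kappa)=2^\kappa$; the paper instead uses downward reflection (L\"owenheim--Skolem plus Mostowski collapse) to obtain a transitive $M\ni\A$ with $|M|=|\trcl(\A)|$, achieving the sharper bound $\Delta_1^{\id}$. Since bounded symbiosis only requires \emph{some} definably bounding $F$, your weaker bound suffices, and your argument has the virtue of being entirely concrete (no appeal to reflection). The paper's route, however, shows that for $\LWF$ no cardinal exponentiation is actually needed in the witness bound, which is a slightly stronger piece of information. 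For clause~(2), your argument is exactly the paper's: $\Q_\varnothing$ is outright $\LWF$-axiomatizable, so trivially $\DDelta^{\rm B}(\LWF)$.
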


\begin{proof} The same proof as Proposition \ref{symbiosis1} works. For (1), note that  we may always use reflection to find $M$ such that $|M|  = |\trcl(A)|$.   This implies that $\models_{\LWF}$ is $\Delta_1^{\id}$. For part (2), $\Q_{\rm WF}$ is actually $\LWF$-definable, hence $\DDelta^{\rm B}(\LWF)$-definable. \end{proof}

\newcommand{\Inf}{{\mathrm{Inf}}}
\newcommand{\Like}{\mathrm{Like}}

\begin{proposition} \label{nontrivial} The pairs $\LI$ and $\Cd$ are bounded symbiotic. \end{proposition}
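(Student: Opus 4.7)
We adapt the proof of Proposition \ref{symbiosis2} to the bounded setting, handling the two conditions of bounded symbiosis separately.

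For condition (1), the plan is to sharpen the absoluteness argument: $\A \models_{\LI} \phi$ holds if and only if $\He_{|\trcl(\A)|^+} \models (\A \models_{\LI} \phi)$. Indeed $\He_{|\trcl(\A)|^+}$ is transitive, $\Cd$-correct by Lemma \ref{lemmichka}(1), contains both $\A$ and $\phi$, and models a sufficient fragment of $\ZFC$. Its cardinality is $2^{|\trcl(\A)|}$, so a witness can always be found of $\He$-rank at most $\beth_1(\rho_\He(\A))$; the dual $\Pi_1^{\beth_1}(\Cd)$ version is obtained by universal quantification over all such models. Since $\beth_1$ is definably bounding by Example \ref{alephbound}, we conclude that $\models_{\LI}$ is $\Delta_1^{\beth_1}(\Cd)$-definable.

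For condition (2), by Lemma \ref{lemmacombi} it suffices to show that $\Q_\Cd$ is $\DDelta^{\rm B}(\LI)$-axiomatizable. We decompose $\Q_\Cd$ as in Proposition \ref{symbiosis2} into three conjuncts: (a) $(A, E)$ is extensional, (b) the $\LI$-sentence expressing $\Cd$-correctness, and (c) $E$ is well-founded. Conditions (a) and (b) are first-order and $\LI$-sentences respectively on the original vocabulary, hence trivially $\DDelta^{\rm B}(\LI)$. The $\PPi$-part of (c) (the failure of well-foundedness) is $\SSigma^{\rm B}(\LI)$ via the standard formula introducing a single unary predicate on the existing sort, which carries no cardinality blow-up. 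The remaining task is to axiomatize well-foundedness itself positively in $\SSigma^{\rm B}(\LI)$.

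The main obstacle is that the Lindstr\"om trick from Proposition \ref{symbiosis2} introduces an auxiliary sort of cardinality $\aleph_\alpha$, where $\alpha$ is the rank of $(A, E)$; since $\alpha$ can be as large as $|A|^+$, this cardinality exceeds every definably bounding function of $|A|$. Our plan is to replace that witness by a structure confined to the original sort: introduce a function symbol $r\colon A \to A$ together with the FO-definable class $O \subseteq A$ of \emph{internal ordinals} of $(A, E)$, and require (i) $r[A] \subseteq O$, (ii) $a E b \Rightarrow r(a) E r(b)$, and (iii) an $\LI$-sentence about $(O, E)$ forcing it to be well-ordered. Because no new sort is added, the extended structures have the same cardinality as $(A, E)$, and the axiomatization is automatically $\SSigma^{\rm B}(\LI)$.

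The hardest step, which we expect to be the core technical challenge, is formulating (iii). The naive condition ``strictly increasing cardinality of initial segments along $E$'' is too strong --- it already fails for $\omega$ and $\omega + 1$. Our plan is to restrict the cardinality-increase clause to the FO-definable subclass of \emph{internal cardinals} of $(A, E)$, and separately require via the H\"artig quantifier that every element of $O$ is $E$-dominated by some such internal cardinal. Combined with $\Cd$-correctness --- which forces internal cardinals of $(A, E)$ to coincide with true cardinals of $V$ and so to be well-ordered --- this should imply that $(O, E)$ is well-ordered, after which the $E$-monotone rank function $r$ rules out infinite descending $E$-chains in $(A, E)$. Carrying out this verification rigorously, and checking that the resulting sentence captures $\Q_\Cd$ without spurious models, is where the bulk of the technical work is expected to lie.
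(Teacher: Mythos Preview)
Your treatment of condition~(1) is essentially the paper's argument (with a slightly sharper bound $\beth_1$ in place of $\beth_2$, which is fine). The real divergence is in condition~(2), and there your approach has a genuine gap.

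The problem is the existence of the function $r$. You want $r\colon A \to O$ to be strictly $E$-monotone, where $O$ is the class of internal ordinals of $(A,E)$. But a transitive $\Cd$-correct set $M$ need not contain enough ordinals for this: set $a_0=\emptyset$, $a_{n+1}=\{a_n\}$, and $M=\{a_n : n<\omega\}$. Then $M$ is transitive and $\Cd$-correct (the only cardinals in $M$ are $0$ and $1$, and $M$ thinks both are cardinals), so $(M,\in)\in\Q_\Cd$; yet $M\cap\Ord=2$ while the $\in$-rank of $a_n$ is $n$, so no strictly $E$-monotone map $r\colon M\to\{0,1\}$ exists. Hence your sentence has no expansion over $(M,\in)$, and your axiomatization under-generates $\Q_\Cd$.

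The underlying misconception concerns what ``bounded'' requires. Definition~\ref{bounded} only asks that each $\A$ admit \emph{some} cardinal bound $\lambda_\A$ on the size of its expansions; this bound may depend on the full structure $\A$, not merely on $|A|$, and it need not be given by a definably bounding function. The paper exploits exactly this freedom: it keeps an auxiliary sort $(B,<)$ as in the Lindstr\"om trick but adds two tightness clauses---roughly, ``$|f(a){\downarrow}|$ is the \emph{least} cardinal above all $|f(a'){\downarrow}|$ for $a'\,E\,a$'' and ``the range of $f$ is cofinal in $B$''---and then proves that every expansion satisfies $|B|\le\aleph_{\rk_E(A)}$. That is a perfectly legitimate model-dependent bound $\lambda_{(A,E)}$, so the resulting axiomatization is $\SSigma^{\mathrm B}(\LI)$. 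Your attempt to eliminate the second sort entirely is stronger than what Definition~\ref{bounded} demands, and as the example above shows, it cannot succeed without imposing further set-theoretic hypotheses on $(A,E)$ that $\Q_\Cd$ does not include.
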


\begin{proof} Again we look at the proof of Proposition \ref{symbiosis2}. For (1), we have the stronger equivalence:  $\A \models_\LI \phi $

\medskip
  $\;\; \text{ iff }  \exists M \: (\rho_{\He}(M) \leq 2^{2^{\rho_\He(\A)}} \; \land \; M \text{ transitive and $\Cd$-correct}\; \land \; M \models \ZFC^{-*}$
  
  $\;\; \; \;\;\;\;\; \land \;  \A \in M \; \land \;  M \models (\A \models_\LI \phi) )$

\medskip
$\;\; \text{ iff }  \forall M \: (( (\rho_{\He}(M) \leq 2^{2^{\rho_\He(\A)}}  \; \land \; M \text{ transitive  and $\Cd$-correct} $

$\;\;\;\;\;\;\;\;\; \land \;  M \models \ZFC^{-*}   \land  \A \in M)  \; \to \; M \models (\A \models_\LI \phi) )$

\p As in the proof of Lemma \ref{lemmacombi}, we know that for any $\A$ we can let $\theta = |\trcl(\A)|^+$, so that
 
 $$|\He_\theta| \leq 2^\theta = 2^{\rho_{\He}(\A)^+} \leq 2^{2^{\rho_\He(\A)}}$$
  and $\He_\theta$ is $\Cd$-correct by Lemma \ref{lemmichka} (1). Thus, the relation $\models_\LI$ is $\Delta_1^F(\Cd)$ for the definably bounding function $F(\alpha)  = 2^{2^{\alpha}}$.

 \p
Now we check (2) of bounded symbiosis. Again, looking at the proof of Proposition \ref{symbiosis2}, we see that clauses (b) and (c) are $\LI$-sentences, and (a) is $\PPi^{\rm B}(\LI)$ since we do not need to add new sorts. The only  issue, then, is to prove that ``$(A,E)$ is well-founded'' is $\SSigma^{\rm B}(\LI)$, which is less trivial because the method described previously   does not yield an upper bound on the size of the second sort. So we need to adapt this method. The idea is to add a new linear ordering $(B,<)$ to the structure  $(A,E)$, and a function $f:A \to B$, such that $B$ plays the role of the appropriate cardinals $\aleph_\alpha$.

\p Suppose $(B,<)$ is a linear order. For $b\in B$  let $b{\downarrow} = \{b' \in B \: : \: b' < b\}$ denote the set of $<$-predecessors of $b$. We say that $b$ is \emph{cardinal-like} if for every $b' < b$  we have $| {b'{\downarrow}} | < | b {\downarrow}|$, and the set $B$ itself is \emph{cardinal-like} if for every $b \in B$ we have $|b {\downarrow}| < |B|$.

\p Consider the language with two sorts $s_0$ and $s_1$, a binary relation symbol $E$ in $s_0$, a binary relation $<$ in $s_1$, and a function symbol $f$ from $s_0$ to $s_1$. In the following proof, we will informally refer to the domains of the respective sorts by $A$ and $B$. 

\p First define the following abbreviations:
$$\Inf(b)  \; \equiv \; \forall^B x < b  \: \mathrm{I} y z (y< b \land y\neq x) (z < b)$$
i.e., $b$ has infinitely many $<$-predecessors.
$$\Like(b) \; \equiv \; \forall^B b' < b  \; \lnot \mathrm{I} y z (y< b) (z< b')),$$
i.e., $b$ is cardinal-like.  Let $\Phi$ be the conjunction of the following $\LI$-formulas:



\begin{enumerate}[(i)]

\item \label{1} $<$ is a linear order;

\item \label{2} $\forall^B b \;\lnot \mathrm{I} x y (x <  b)(y=y)$

i.e., ``$B$ is cardinal-like'';

\item \label{3} $\forall^B b ( \mathrm{Inf}(b)\rightarrow \exists^B b' \leq b (\mathrm{I}xy (x< b)(y< b') \land \mathrm{Like}(b')))$

i.e., ``no infinite cardinals are skipped'';

\item \label{4} $\forall^A a \forall^A a' \;(   a E a' \rightarrow f(a) < f(a'))$

  i.e., ``$f$ is order-preserving'';

\item \label{5} $\forall^A a (\mathrm{Inf}(f(a)) \land \mathrm{Like}(f(a)))$

 i.e., ``every $f(a)$ is infinite and cardinal-like'';

\item \label{6} $\forall^A a \: \forall^B b \: (b < f(a) \: \to \: \exists^A a' \: (a' E a \; \land \; b \leq f(a'))$

 i.e., ``every  $|f(a) { \downarrow}|$ is the least cardinal higher than $|f(a') {\downarrow}|$ for all $a' E a$'';
 
 \item \label{7}  $\forall^B b \:  \exists^A a \: ( b \leq f(a))$
 
 i.e., ``the range of $f$ is cofinal in $B$''.

\end{enumerate}

\p Now we prove several claims, which  together imply that ``$(A,E)$ is well-founded'' is $\SSigma^{\rm B}(\LI)$. For ease of notation we will identify the symbols $E, <$ and $f$ with their respective interpretations.

\begin{claim} \label{claim1} $(A,E)$ is well-founded iff $(A,B,E,<,f) \models_\LI \Phi$ for some $B, <$ and $f$. \end{claim}

\begin{proof} First, suppose $(A,E)$ is well-founded. Let $\rk_E$ be the rank function induced by $E$, let $B = \aleph_{\rk_E(A)}$, and let $f(a)=\aleph_{\rk_E(a)}$. Then it is easy to verify that $(A,B,E,<,f) \models_\LI \Phi$. Conversely, suppose $(A,B,E,<,f) \models \Phi$. Then for every $aEa'$ we have $|f(a){\downarrow}| < |f(a'){\downarrow}|$, as follows easily from the fact that $<$ is transitive, that $f$ is order-preserving, and that every $f(a)$ is cardinal-like. But then $E$ must be well-founded. \qedhere  $\;$ (Claim \ref{claim1})
\end{proof}

\begin{claim} \label{claim2} Suppose $(A,B,E,<,f) \models_\LI \Phi$. Then \begin{enumerate}

\item  For all $b \in B$ and all $\lambda < |b {\downarrow}|$, there exists $c < b$ such that $\lambda \leq |c {\downarrow} | < |b {\downarrow}|$.
\item For all $b \in B$ and all $\lambda < |b {\downarrow}|$, there exists $d < b$ such that $|d  {\downarrow}| = \lambda$.
\item For all  $\lambda < |B|$, there exists $d$ such that $|d  {\downarrow}| = \lambda$.
\end{enumerate}
\end{claim}

\begin{proof} $\;$ 

\begin{enumerate}
\item Let $b \in B$ and $\lambda < |b {\downarrow}|$. By  (\ref{3}) there is a $b' < b$ such $|b' {\downarrow}| = |b {\downarrow}|$ and $b'$  is cardinal-like. We claim that there is $c<b'$ such that $\lambda \leq |c {\downarrow}|$. Towards contradiction, suppose this is false. Let $\{c_\alpha : \alpha<|b' {\downarrow}|\}$ enumerate $b' {\downarrow}$ and consider the initial $\lambda$-sequence of this enumeration, i.e., $\{c_\alpha  :  \alpha<\lambda\}$. This sequence cannot be $<$-cofinal in $b'$, otherwise we would have $|b' {\downarrow}| = | \bigcup_{\alpha<\lambda} (c_\alpha {\downarrow} ) |  \leq \lambda \times \lambda = \lambda$, which is a contradiction. Therefore, there is $c < b'$ such that $\{c_{\alpha} : \alpha < \lambda\} \subseteq c { \downarrow}$. But then   $\lambda \leq |c{\downarrow}|$, also a contradiction.
\item First apply (1) to find $c_0 < b$ such that $\lambda \leq |c_0 {\downarrow}| < |b {\downarrow}|$. Apply again to find $c_1 < c_0$ such that $\lambda \leq |c_1 {\downarrow}| < |c_0 {\downarrow}|$, etc. By well-foundedness, this process will stop after finitely many steps and we will find $d<b$ such that $\lambda = |d {\downarrow}|$.

\item By an analogous argument as in (1) above, and using (\ref{2}), we first find $b \in B$ such that $\lambda \leq |b {\downarrow}|< |B|$. Then proceed as in (2).  \qedhere  (Claim \ref{claim2}).

\end{enumerate}
\end{proof}

\begin{claim} \label{claim3} Suppose $(A,B,E,<,f) \models_\LI \Phi$. Then $|A \cup B| \leq  \aleph_{\rk_E(A)}$.
\end{claim}

\begin{proof} We prove, by induction on $E$, that for all $a \in A:$

$$|f(a){\downarrow}| \leq \aleph_{\rk_E(a)}.$$

\p Suppose the above holds for all $a' E a$. Towards contradiction suppose $|f(a){\downarrow}| > \aleph_{\rk_E(a)}$. By Claim \ref{claim2} (2), we can find $d < f(a)$ such that $|d {\downarrow}| =  \aleph_{\rk_E(a)}$. By (\ref{6}), there exists $a' E a$ such that $d \leq f(a')$. But this means that 
$$\aleph_{\rk_E(a)} = |d {\downarrow}| \leq  |f(a'){\downarrow}| =    \aleph_{\rk_E(a')}$$
which is a contradiction since $\rk_E(a') < \rk_E(a)$. This competes the induction.

\p  Completing the proof requires repeating the above argument once more: if $|B| > \aleph_{\rk_E(A)}$, then by Claim \ref{claim2} (3) there is $d \in B$ such that $|d {\downarrow}| = \aleph_{\rk_E(A)}$, and by   (\ref{7}) there is $a \in A$ with $d \leq f(a)$, implying

$$\aleph_{\rk_E(A)} = |d {\downarrow}| \leq  |f(a){\downarrow}| =   \aleph_{\rk_E(a)}$$
 which is a contradiction since by definition $\rk_E(a) < \rk_E(A)$. It follows that $|A \cup B| = |B| \leq \aleph_{\rk_E(A)}$. \qedhere  $\;$ (Claim \ref{claim3})

\end{proof}

\end{proof}

\begin{proposition} \label{boundedsymbiosis3} The pairs $\L^2$ and $\PwSt$ are bounded symbiotic. \end{proposition}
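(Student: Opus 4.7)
The approach is to mirror the proof of Proposition \ref{symbiosis3}, adding the size bookkeeping needed to convert $\Delta_1$ into $\Delta_1^F$ and $\DDelta$ into $\DDelta^{\mathrm{B}}$, exactly as was done for $\LI$ in Proposition \ref{nontrivial}.

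For clause (1) of bounded symbiosis, I would take $F = \beth_3$ and show that $\models_{\L^2}$ is $\Delta_1^F(\PwSt)$-definable. Given $\A$ with $\rho_\He(\A) = \mu$, set $\theta := (2^\mu)^+$; then $\He_\theta$ is transitive, contains $\A$, and contains $\Pow(A^n)$ for every $n < \omega$ (since $|\Pow(A^n)| = 2^\mu$ uniformly in $n$ for infinite $A$), and hence correctly evaluates $\A \models_{\L^2} \phi$. By Lemma \ref{lemmichka}(1), $\He_\theta$ is also automatically $\PwSt$-correct, and $\He_\theta \models \ZFC^{-*}$ for $\theta$ sufficiently large. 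Since $|\He_\theta| \leq 2^\theta \leq 2^{2^{2^\mu}} = \beth_3(\mu)$, the $\exists M / \forall M$ biconditionals used in Proposition \ref{nontrivial} --- now with the $\He$-rank bound $\beth_3(\rho_\He(\A))$ --- yield matching $\Sigma_1^F(\PwSt)$ and $\Pi_1^F(\PwSt)$ definitions of $\A \models_{\L^2}\phi$. That $\beth_3$ is definably bounding follows from Example \ref{alephbound} together with the discussion about iterates that follows it.

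For clause (2), the plan is to invoke Lemma \ref{lemmacombi} and verify that $\Q_{\PwSt}$ is $\DDelta^{\mathrm{B}}(\L^2)$-axiomatizable. As in Proposition \ref{symbiosis3}, $(A,E) \in \Q_{\PwSt}$ iff $E$ is well-founded and extensional and $(A,E) \models_{\L^2}(y = \Pow(x) \leqqq \Phi(x,y))$, where $\Phi(x,y)$ is the $\L^2$-formula asserting that $y$ is the true power set of $x$. Extensionality is first-order, the power-set condition is by construction an $\L^2$-sentence, and --- unlike in Proposition \ref{nontrivial}, where an auxiliary sort was needed --- well-foundedness of $E$ is directly expressible in $\L^2$ by $\forall^1 X \, (\exists^0 x\,X(x) \to \exists^0 x \, (X(x) \land \forall^0 y \, (X(y) \to \lnot y E x)))$. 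Because none of the three clauses introduces a new sort, the axiomatization of $\Q_{\PwSt}$ is outright $\L^2$, a fortiori $\DDelta^{\mathrm{B}}(\L^2)$.

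The only point requiring care is the uniform choice of $F$ in (1): $\Delta_1^F$ fixes $F$ once and for all, independently of $\phi$. The saving observation is that, although evaluating a given $\L^2$-formula needs $\Pow(A^n)$ for only finitely many $n$, the cardinality $|\Pow(A^n)|$ does not grow with $n$ for infinite $A$, so a single $\beth_3$ bound suffices for every $\phi$. On the model-theoretic side there is genuinely nothing extra to do beyond Proposition \ref{symbiosis3}, since $\L^2$'s native ability to express well-foundedness sidesteps the sort-extension trick needed in Proposition \ref{nontrivial}.
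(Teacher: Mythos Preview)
Your proposal is correct and follows essentially the same route as the paper: for (1) you insert an explicit $\He$-rank bound into the $\exists M/\forall M$ biconditionals exactly as in Proposition~\ref{nontrivial}, and for (2) you observe that $\Q_{\PwSt}$ is outright $\L^2$-axiomatisable (well-foundedness included), so no new sorts arise and the bounded $\DDelta$ condition is automatic. The only difference is cosmetic: the paper records the bound $F(\alpha)=2^{2^{\alpha}}$ (via the sharper estimate $|\He_{(2^\mu)^+}|\le 2^{2^\mu}$), whereas you use the cruder $|\He_\theta|\le 2^\theta$ and arrive at $2^{2^{2^\mu}}$; either choice is a definably bounding function, so this does not affect the argument.
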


 \begin{proof} A straightforward adaptation of the proof of Proposition \ref{symbiosis3} works. Using the same trick as above, in (1) we see that $\models_{\L^2}$ is $\Delta^F_1(\PwSt)$ for $F(\alpha) = 2^{2^{\alpha}}$.  For (2), we do not need to change anything since the class $\Q_\PwSt$ is already $\L^2$-axiomatisable.  \end{proof}

\section{The Upwards Structural Reflection principle}\label{SecCountable}

Now we consider  a \emph{reflection number} analogous to the one in Definition \ref{SR} which, as in \cite{Symbiosis}, will allow us to connect the strength of existence of upward L\"owenheim-Skolem numbers for strong logics to large cardinals. 





\begin{definition}  Let $R$ be a $\Pi_1$ predicate in the language of set theory. The \emph{bounded upwards structural reflection number}  $\USR(R)$ is the least $\kappa$ such that:

\begin{quote} For every definably bounding function $F$, and every $\Sigma^{F}_1(R)$-definable class of $\tau$-structures in a fixed vocabulary $\tau$ closed under isomorphisms:

 If there is $\A \in \K$ with $|\A| \geq \kappa$, then for every $\kappa' > \kappa$ there is a $\B \in \K$ with $|\B| \geq \kappa'$ and an elementary embedding $e:\A \embed_{\L_{\omega \omega}} \B$. \end{quote}

If there is no such cardinal,   $\USR(R)$ is undefined.

\end{definition}

\begin{Remark} In this definition, we are assuming that $\K$ is definable by a $\Sigma^{F}_1(R)$-formula \emph{without  parameters}. In particular, the definition presupposes that $\tau$, as a vocabulary, is itself $\Sigma^{F}_1(R)$-definable (e.g., finite). Notice that if arbitrary $\tau$ were allowed,  $\USR(R)$ would never be defined: for any $\kappa$, we could take a vocabulary $\tau$ with $\kappa$-many constant symbols and let $\K$ be the class of $\tau$-structures such that every element is the interpretation of a constant symbol, which is $\Delta_1^\id$ in $\tau$. One could avoid this problem by considering classes defined with parameters of a limited $\He$-rank; but then, to prove results like the ones in this section, one would need to extend the corresponding logic in such a way that the parameter can be defined.  For the current paper, the parameter-free version will be sufficient. \end{Remark}

Our main theorem below is proved for logics which have $\Delta_0$-definable syntax and dependence number $\omega$. This is necessary if we want to avoid parameters---recall the discussion in Section \ref{SectionPreliminaries}. All logics obtained by adding finitely many quantifiers to first- or second-order logic, such as $\LWF, \LI, \L^2$ and the examples in  \cite[Proposition 4]{Symbiosis}, are covered by this theorem. For the $\ULST$-principle, see Definition \ref{upwn} and recall  that since we are assuming $\dep(\L^*) = \omega$,   $\ULST_{\omega}(\L^{*}) = \ULST_{\infty}(\L^{*})$.

\begin{theorem}[Main Theorem] \label{SymbiosisTheorem}   
Let $\L^{*}$ be a logic with  $\Delta_0$-definable syntax and $\dep(\L^*) = \omega$, and let $R$ be a $\Pi_1$ predicate. Assume that $\L^{*}$ and $R$ are {boundedly symbiotic}. 
Then the following are equivalent:

\begin{enumerate}[$\;\;\;\;\;(1)$]
\item $\ULST_{\infty}(\L^{*}) = \kappa$,  
\item $\USR(R) = \kappa$. 
\end{enumerate}
 
 \end{theorem}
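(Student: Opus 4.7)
The plan is to prove both directions of the equivalence, showing $\USR(R)\leq\kappa$ iff $\ULST_\infty(\L^*)\leq\kappa$. Direction $(2)\Rightarrow(1)$ is straightforward: translate the $\L^*$-formula into a $\Sigma_1^F(R)$-class and invoke $\USR$. Direction $(1)\Rightarrow(2)$ is subtler because $\ULST$ delivers only substructures whereas $\USR$ demands $\L_{\omega\omega}$-elementary embeddings; the idea is to encode the required elementary embedding directly into the axiomatising $\L^*$-sentence by exploiting the internal $\in$-model supplied by bounded symbiosis.

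For $(2)\Rightarrow(1)$, given $\phi\in\L^*[\tau]$ and $\A\models\phi$ with $|A|\geq\kappa$, the hypothesis $\dep(\L^*)=\omega$ lets us assume $\tau$ finite. Bounded symbiosis~(1) makes $\models_{\L^*}$ a $\Delta_1^F(R)$-relation, so $\K:=\{\B:\B\models_{\L^*}\phi\}$ is $\Sigma_1^F(R)$-definable; the $\Delta_0$-definable syntax of $\L^*$ together with the finiteness of $\tau$ renders the definition parameter-free, and $\K$ is isomorphism-closed. Applying $\USR(R)\leq\kappa$ to $\K$ yields, for each $\kappa'>\kappa$, some $\B\in\K$ with $|B|\geq\kappa'$ and $e:\A\embedFOL\B$; renaming elements along $e$ produces an isomorphic copy of $\B$ in which $\A$ is literally a substructure, as required.

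For $(1)\Rightarrow(2)$, let $\K$ be a $\Sigma_1^F(R)$-definable, isomorphism-closed class of $\tau$-structures (with $\tau$ finite) and $\A\in\K$ with $|A|\geq\kappa$. Let $\tau^+$ extend $\tau$ with a second sort $s_1$, a binary relation $E$ on $s_1$, a function $G$ from the original sort $s_0$ into $s_1$, and a constant $c$ in $s_1$. Let $\K^+$ be the class of $\tau^+$-structures $\M$ such that $(s_1^\M,E^\M)$ is a transitive $R$-correct $\in$-model containing $\omega$, $G^\M$ injects the external $\tau$-reduct into $s_1^\M$ with image $G^\M[s_0^\M]$ a set in $s_1^\M$, $(s_1^\M,E^\M)\models G^\M[s_0^\M]\in\K$, and $(s_1^\M,E^\M)\models\,``c^\M\embedFOL G^\M[s_0^\M]''$. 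Because $\models_{\L_{\omega\omega}}$ is $\Delta_1^{\id}$ (Remark~\ref{remarklww}) and absolute for transitive $R$-correct models containing the relevant sets (Lemma~\ref{lemmichka}), the elementary-embedding schema collapses to a single $\L_{\omega\omega}$-sentence in $\tau^+$ via the internal satisfaction relation, and the remaining clauses are likewise $\Sigma_1^{F'}(R)$-expressible, so $\K^+$ is $\Sigma_1^{F''}(R)$-definable and isomorphism-closed. Applying bounded symbiosis (the $\SSigma^{\mathrm{B}}(\L^*)$-axiomatisation construction in the proof of Lemma~\ref{lemmacombi}) then yields $\chi\in\L^*[\sigma^+]$ with $\sigma^+\supseteq\tau^+$ such that $\K^+$ is the bounded projection of $\Mod(\chi)$, with non-decreasing bounding function $h$ as in Lemma~\ref{Lemma:FunctionHBounded}.

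Construct an initial $\M^*\in\Mod(\chi)$ whose $\tau^+$-reduct has external $\tau$-part $\A$, $(s_1,E)=(\He_\theta,\in)$ for some $\theta>|A|^+$, $c=\A$ (the set $\A$ itself), and $G$ the canonical inclusion $\A\hookrightarrow\He_\theta$; the elementary-embedding clause reduces to $\A\embedFOL\A$, so $\M^*\models\chi$ and $|\M^*|\geq|A|\geq\kappa$. Apply $\ULST_\infty(\L^*)\leq\kappa$ at a target $\kappa''$ chosen large enough relative to $h$ and the internal-model size bound built into $\K^+$ so that $|\N^*{\till}\tau|\geq\kappa'$, obtaining $\N^*\models\chi$ with $\M^*\subseteq\N^*$ and $|\N^*|\geq\kappa''$. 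Because $c$ is a constant, $c^{\N^*}=c^{\M^*}=\A$, while $G^{\N^*}[s_0^{\N^*}]$ internally codes $\B:=\N^*{\till}\tau$; the elementary clause of $\chi$ therefore asserts in $(s_1^{\N^*},E^{\N^*})$ that $\A\embedFOL G^{\N^*}[s_0^{\N^*}]$, and since the universal quantifier over formula codes and tuples in $c=\A$ ranges over objects all present in the transitive $R$-correct $(s_1^{\N^*},E^{\N^*})$, absoluteness transfers this to the external $\A\embedFOL\B$; a similar transfer delivers $\B\in\K$. The main obstacle is the correct internalisation of ``$c\embedFOL G[s_0]$'' as a single $\L_{\omega\omega}$-sentence in $\tau^+$ whose internal truth equals external truth, which is secured by the presence of $c$, $\omega$ and the witness set $G[s_0]$ in the transitive $R$-correct $(s_1,E)$ together with the cardinal bookkeeping from Lemma~\ref{Lemma:FunctionHBounded}.
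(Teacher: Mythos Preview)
Your argument for $(2)\Rightarrow(1)$ is correct and essentially identical to the paper's.

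In $(1)\Rightarrow(2)$, however, there is a genuine gap in the step ``$c^{\N^*}=c^{\M^*}=\A$, so the internal clause $c\embedFOL G[s_0]$ transfers to the external $\A\embedFOL\B$''. The equality $c^{\N^*}=c^{\M^*}$ only fixes the \emph{element} $a_0\in s_1^{\N^*}$; it does \emph{not} fix the internal structure that $(s_1^{\N^*},E^{\N^*})$ sees at $a_0$. When you pass from $\M^*$ to the superstructure $\N^*$, new elements $n\in s_1^{\N^*}\setminus\He_\theta$ may appear with $n\,E^{\N^*}\,x$ for various $x$ in the transitive closure of $a_0$; nothing in the substructure relation $\M^*\subseteq\N^*$ prevents this. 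Consequently, after Mostowski collapse $\pi:(s_1^{\N^*},E^{\N^*})\cong(\bar N,\in)$, the structure $\pi(a_0)$ need not be isomorphic to $\A$: it may be strictly larger, and $\pi\upharpoonright A$ need not be elementary into it. Your internal clause therefore yields only $\pi(a_0)\embedFOL\B$, not $\A\embedFOL\B$, and there is no composition argument available since you have no control over the map $\A\to\pi(a_0)$.

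The paper resolves precisely this point by a different mechanism: instead of internalising the elementary-embedding statement, it adds countably many Skolem function symbols $f_\psi$ (one for each quantifier-free $\psi(x,\bar z)$ in $\{E,c\}$) and includes in $\K^*$ the schema $\forall\bar z\,(\exists x\,\psi(x,\bar z)\to\psi(f_\psi(\bar z),\bar z))$. Since $\M_1\subseteq\N_1$ as substructures, the Skolem functions agree on $\He_\theta$, forcing $(\He_\theta,\in,\A)\preccurlyeq_{\Sigma_1}(N,E^{\N},c^{\N})$ for formulas in $\{E,c\}$. Because first-order satisfaction ``$c\models\phi(\bar a)$'' is $\Delta_1$ in this language, one gets directly that $\A\models\phi(\bar a)$ iff $\A'=\pi(c^{\N})\models\phi(\pi(\bar a))$, and $\pi\upharpoonright A$ is the required elementary embedding. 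Your constant-$c$ trick is a natural first idea, but it cannot substitute for this Skolem-function step.
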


\begin{proof} $(2) \Rightarrow (1)$. Suppose $\USR(R) = \kappa$.  Let  $\phi$ be an $\L^*$-formula, and let $\A \models_{\L^*} \phi$ with $|\A| \geq \kappa$. Letting $\kappa'$ be any cardinal above $\kappa$, the  goal is to find a super-structure $\B$ of $\A$ such that $\B \models_{\L^*} \phi$ and   $|\B| \geq \kappa'$.

\p Consider the class $\K = \Mod(\phi)$. By condition (1) of \emph{bounded symbiosis},  $\K$ is $\Delta_1^F(R)$-definable, hence $\Sigma_1^F(R)$, with parameter $\phi$. However, since $\dep(\L^*) = \omega$, we may assume  that the vocabulary of $\phi$ is finite. Moreover,   $\L^*$ has a $\Delta_0$-definable syntax, so $\phi$ is $\Delta_0$-definable, therefore $\K$ is in fact $\Sigma_1^F(R)$-definable \emph{without} parameters. It is also  clearly closed under isomorphisms. 

\p Applying  $\USR(R)$ we find a $\B' \in \K$, such that   $|\B'| \geq \kappa'$ and  there is $e: \A  \; {\embedFOL} \; \B'$. We can also easily find $\B \cong \B'$ such that $\A$ is a substructure of $\B$, and this is what we need.

\bigskip

 \p $(1) \Rightarrow (2)$. Now assume $ \ULST_\infty(\L^*) = \kappa$, and let $\K$ be a $\Sigma^{F}_1(R)$-definable transitive class of $\tau$-structures, with $\Phi(x)$ the defining $\Sigma_1^{F}(R)$-formula.

  {
  
  \p Since the $\USR$-principle involves elementary embeddings whereas $\ULST$ does not, the proof must proceed indirectly. The intuition is that we first embed a given structure $\A \in \K$ in a larger structure that includes a model of set theory $\He_\theta$ and includes Skolem functions for first-order existential sentences; then we apply $\ULST$ to (a further extension of) this larger structure. To make sure that enlarging the set-theoretic structure also yields an enlargement of the original structure, we must carefully keep track of the relations between cardinalities given by the various bounds in the definition of \emph{bounded symbiosis}. See Figure \ref{triangles}.

  \begin{figure}[h] 
  \begin{center}
  \vskip-0.3cm \includegraphics[width=13cm]{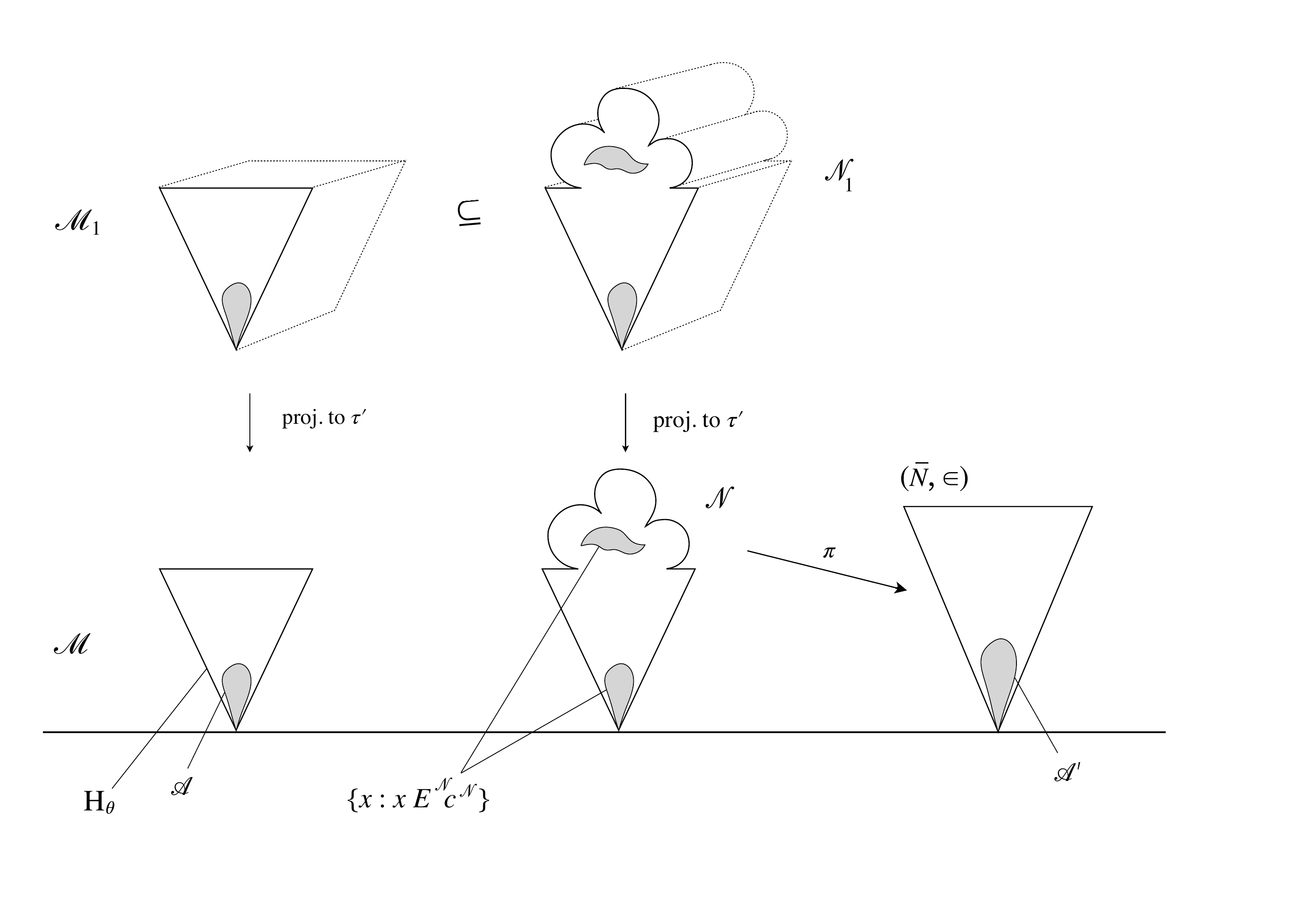} \vskip-0.5cm
  \end{center}
\caption{Structure of the proof.} \label{triangles} 
  \end{figure}

  \p Assume that  $\tau$ is in one sort (a similar proof works in the general case). Similarly to the proof of Lemma \ref{lemmacombi},  define a vocabulary $\tau'$ with two sorts: $s_0$ and $s_1$, with all of the symbols occurring in $\tau$ written in sort  $s_1$. Let $E$ be a binary relation symbol in sort $s_0$, $G$ a function symbol  from $s_1$ to $s_0$, and $c$ a constant symbol  in sort $s_0$.  In addition, for every quantifier-free first-order formula $\psi(x,y_1, \dots, y_n)$ in the language $E,c$, add an $n$-ary function symbol $f_\psi$ of sort $s_0$.
  
  \p Let $\K^{*}$ be the class of all structures 
$\mathcal{M} := \left(M,A,E^\M,G^\M,c^\M, \{f^\mathcal{M}_\psi\} \right)$
such that \begin{enumerate}
\item $(M,E^\mathcal{M})\models \ZFC^{-*}$, 
\item $(M,E^\mathcal{M})\in {\Q}_R$, i.e., it is isomorphic to a transitive $R$-correct model, 
\item  $|M| \leq 2^{2^{F(|A|)}}$,
\item $\M \models \Phi(c)$, written  with $E$ instead of $\in$,
\item $\M \models \forall \bar{z}  \: ( \exists x \psi(x, \bar{z}) \to \psi(f_\psi(\bar{z}), \bar{z}) )$ for every quantifier-free $\psi$.
\item $\M \models G$ is a bijection between $A$ and $\{x : x \:E \:c\}$. \end{enumerate}

\p Conditions 1, 4 and 6  are in first-order logic,  whereas 2 is $\DDelta^{\rm B}(\L^*)$-axiomatizable by the equivalent condition (2) of \emph{bounded symbiosis}. Moreover, 3 is $\SSigma^{\mathrm{B}}(\L^{*})$-axiomatisable by the definition of ``definably bounding'' (Definition \ref{definablybounding}), by Example \ref{alephbound}, and the discussion following it.  Finally, while 5 might look like an infinite set of sentences (and we are not assuming that $\L^*$ is infinitary), it is still true that, since $\models$ is $\Delta^\id_1$, the entire condition 5 can be expressed in a $\Delta^\id_1$ way in set theory. By condition (2) of \emph{bounded symbiosis}, the class of models satisfying 5 is $\DDelta^{\rm B}(\L^*)$.  Therefore,  $\K^*$ is $\SSigma^{\rm B}(\L^*)$.

\p Let $\A$ be a structure in $\K$ with $|\A| \geq \kappa$, and   let $\kappa'>  \kappa$ be any cardinal. { Since $\K$ is closed under isomorphisms, we may assume wlog. that $A$ is transitive. } Our goal is to find $\A' \in \K$ such that $|\A'| \geq \kappa'$ and $\A \:\embedFOL \: \A'$.

\p
Let $\theta:= F(\rho_\He(\A))^+ = F(|\A|)^+$, choose Skolem functions $f_\psi^{\He_\theta}: \He_\theta^n \to \He_\theta$, and consider the structure $$\M := (\He_\theta, A ,\in, \mathrm{id}, \A, \{f_\psi^{\He_\psi} \} )$$
 Clearly $\M$ satisfies 1, 5  and 6 of the definition of $\K^{*}$. Moreover, due to Lemma \ref{lemmichka} (1), $\He_\theta$ is $R$-correct and $\Phi(\A)$ is absolute for $\He_\theta$. Hence,  2 and 4 are  satisfied as well. Finally,  3 holds because

$$|\He_\theta| \leq 2^\theta = 2^{F(|\A|)^+} \leq 2^{2^{F(|\A|)}}.$$ 

\p Therefore $\M \in \K^*$.

\p Let $\chi$ be an $\L^*$-sentence in an extended vocabulary $\tau''$ such that  $\K^{*}$ is a ``bounded projection'' of $\Mod(\chi)$. Let $h:\Ord\rightarrow \Ord$ be the function as in Lemma \ref{Lemma:FunctionHBounded}.


\p Let $\M_1 = (\M, \dots)$ be such that $\M_1 \models \chi$ and  $\M_1 \till \tau' = \M$. Since $|\M_1| \geq |\M| \geq |\A| \geq \kappa$, we can apply $\ULST_\infty(\L^{*})=\kappa$ to find  $\N_1$ such that $\N_1 \models \chi$ and  $|\N_1| > h \left(2^{2^{F(\kappa')}}\right)$, and $\M_1 \subseteq \N_1$ (i.e., $\M_1$ is a substructure of $\N_1$). Let $\N = \N_1 \till \tau'$. 
We write $\N = (N,B,E^\N, G^\N, c^\N, \{f_\psi^\N\})$ for this structure.

\p Let  $(\bar{N},\in)$ be the transitive collapse of $(N,E^\N)$, and $\overline{c^\N}$ be the image of $c^\N$ under this collapse. We claim that $\A' := \overline{c^\N}$ is the model we are looking for. 

\begin{claim} $\A' \in \K$. \end{claim}

\begin{proof} Since $\N \in \K^*$, we know that $(N,E^\N) \models \Phi(c)$ (written in $E$), and therefore $\bar{N} \models \Phi(\bar{c})$ (written in $\in$). Also, since $(N,E^\N) \in \Q_R$, we know that $\bar{N}$ is $R$-correct, in particular, $\Sigma_1(R)$ formulas are upwards absolute. Therefore $\Phi(\A')$ is true, so $\A' \in \K$. 
\end{proof}

\begin{claim}
$\kappa' < |\A'|$.
\end{claim}
\begin{proof} By the definition of the function $h$ as in Lemma \ref{Lemma:FunctionHBounded}, we know that $|\N_1| \leq h(|\N|)$. Thus we have

$$h \left(2^{2^{F(\kappa')}}\right) < |\N_1| \leq h(|\N|)$$ and since $h$ is order-preserving,  $2^{2^{F(\kappa')}} < |\N|$. By condition 3 of the definition of $\K^*$, we have $|N| \leq 2^{2^{F(|B|)}}$. Therefore $2^{2^{F(\kappa')}} < |N| \leq 2^{2^{F(|B|)}}$, from which it follows that $\kappa' < |B|$. Finally, by condition 6 we get that $|B| = |\{x \in N : x E^\N c^\N| = |\A'|$. \end{proof}

\begin{claim}
There is an $\L_{\omega\omega}$-elementary embedding from $\A$ to $\A'$.
\end{claim}
\begin{proof} By condition 5, both models $\M_1$ and $\N_1$ satisfy the axioms for Skolem functions concerning first-order quantifier-free formulas in $\{E,c\}$. In addition, since $\M_1$ is a \emph{substructure} of $\N_1$, the interpretations of  $f_\psi$ coincide between the models, i.e., $f_\psi^{\N_1} \till \He_\theta= f_\psi^{\He_\theta} $ for every $\psi$. Thus, if $\N_1 \models \exists x \psi(x,\bar{z})$ and $\bar{z} \in \He_\theta$, then $\N_1 \models \psi(f_\psi(\bar{z}),\bar{z})$, so $(\He_\theta, \in, \A) \models  \psi(f_\psi(\bar{z}),\bar{z})$. It follows that $\N_1$ and $(\He_\theta,\in,\A)$ satisfy the same $\Sigma_1$-formulas in $\{E,c\}$.

\p Let $\pi:N\rightarrow\bar{N}$ be the collapsing map. Since the first-order satisfaction relation is $\Delta_1$, for every first-order $\phi$ and for every  $\bar{a} = a_1,\ldots a_n \in \A$  we have  

\p $\A \models \phi(\bar{a})$

\p $\;\;\Leftrightarrow \;\; \He_{\theta} \models (\A \models \phi(\bar{a}))$

\p  $\;\;\Leftrightarrow \;\; \N_1 \models (c \models \phi(\bar{a}))$


\p $\;\;\Leftrightarrow \;\;  (\bar{N},\in, \A') \models ( c \models \phi(\pi(\bar{a})))$ 

\p $\;\;\Leftrightarrow \;\;  \A' \models \phi(\pi(\bar{a})))$. 

\p Hence $\pi {\upharpoonright A}:\A\preccurlyeq_{\L_{\omega\omega}}\A'$ as required.\footnote{In this proof we have occasionally identified the syntax and semantics of first-order logic for ease of readability.}
\end{proof}
}

\end{proof}


\section{The predicate $\PwSt$ and second order logic} \label{SectionLargeCardinals}

In this section we apply our results to determine upper and lower bounds for the large cardinal strength of $\USR(\PwSt)$ and $\ULST_\infty(\L^2)$, which will also yield upper bounds for    other symbiotic pairs $\L^*$ and $R$. The main point is that  $\PwSt$ can be seen as an upper bound for all $\Pi_1$ predicates. The following is not hard to verify (see \cite[Section 6.5]{GaleottiThesis} for the details).

\begin{Fact}\label{CorBoundedH}
The function $\alpha\mapsto \Ve_\alpha$ is $\Sigma^{F}_1(\PwSt)$-definable (for suitable $F$).  Also, the function $\mathrm{\He}$ that maps every infinite successor cardinal $\theta$ to $\He_{\theta}$ is $\Sigma^{F}_1(\PwSt)$-definable. 
\end{Fact}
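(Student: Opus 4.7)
The plan is to give, in each case, a $\Sigma_1(\PwSt)$-formula defining the function's graph whose existentially quantified witness has $\He$-rank bounded by the $\He$-rank of the output value. Since $F = \id$ is definably bounding (by the argument following Example \ref{alephbound}), this suffices to establish $\Sigma_1^{\id}(\PwSt)$-definability, which is a fortiori $\Sigma_1^F(\PwSt)$-definability for any larger definably bounding $F$.

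For $\alpha \mapsto \Ve_\alpha$, I would use the standard recursive characterisation: $y = \Ve_\alpha$ iff there is a function $g$ with $\dom(g) = \alpha + 1$ satisfying $g(0) = \emptyset$, $\PwSt(g(\beta), g(\beta+1))$ at successors, $g(\lambda) = \bigcup \mathrm{ran}(g {\till} \lambda)$ at limits, and $g(\alpha) = y$. After existentially quantifying $g$, every remaining quantifier is bounded by $\alpha + 1$, so the kernel is $\Delta_0(\PwSt)$. Moreover $\trcl(g)$ is contained in $\alpha \cup \Ve_\alpha$ together with finitely-iterated Kuratowski pairs drawn from these, so for $\alpha \geq \omega$ one has $|\trcl(g)| \leq |\Ve_\alpha|$ (the case $\alpha < \omega$ is immediate). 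Hence $\rho_\He(g) \leq \rho_\He(\Ve_\alpha) \leq \rho_\He((\alpha, \Ve_\alpha))$, as required.

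For $\theta = \mu^+ \mapsto \He_\theta$, note $|\He_{\mu^+}| = 2^\mu$ and hence $\rho_\He(\He_{\mu^+}) = 2^\mu$. The key observation is that every $x$ with $|\trcl(x)| \leq \mu$ arises as the Mostowski collapse of a well-founded extensional relation $R \subseteq \mu \times \mu$ at a designated point $a \in \mu$. The formula $y = \He_{\mu^+}$ then asserts the existence of sets $P_1, P_2$ and a function $s$ with $\PwSt(\mu, P_1)$, $\PwSt(\mu \times \mu, P_2)$, $s \colon P_2 \times \mu \to y$ such that (i) $y$ is transitive and $\mathrm{ran}(s) = y$, and (ii) for all $(R, a) \in P_2 \times \mu$, if $R$ is well-founded and extensional with $a$ in its field then $s(R,a) = \{s(R,b) : b \mathrel{R} a\}$, and $s(R,a) = \emptyset$ otherwise. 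After existentially quantifying $(P_1, P_2, s)$, all remaining quantifiers are bounded by $P_1, P_2, \mu$ and $y$, so the kernel is $\Delta_0(\PwSt)$. A cardinality count shows the witness has transitive closure contained in $\Pow(\mu) \cup \mu \cup y$, of cardinality at most $2^\mu = \rho_\He(\He_{\mu^+})$.

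The main obstacle will be confirming that well-foundedness and the Mostowski-collapse recursion in clause (ii) are genuinely $\Delta_0(\PwSt)$ given the witnesses at hand. Well-foundedness of $R$ can be rephrased using $P_1 = \Pow(\mu)$ as ``every non-empty $S \in P_1$ with $S \subseteq \mathrm{field}(R)$ has an $R$-minimal element'', a statement with quantifiers bounded by $P_1$ and $R$. The identity $s(R,a) = \{s(R,b) : b \mathrel{R} a\}$, regarded as a universal assertion over $a \in \mu$, is likewise bounded. Completeness of the enumeration --- that $\mathrm{ran}(s) = \He_{\mu^+}$ for some choice of $s$ --- then holds externally, since every $x \in \He_{\mu^+}$ admits a code $(R, a) \in P_2 \times \mu$.
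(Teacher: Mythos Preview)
Your argument is correct and complete. The paper itself does not supply a proof of this Fact---it simply states that the result ``is not hard to verify'' and refers to \cite[Section 6.5]{GaleottiThesis} for details---so there is no in-paper argument to compare against. Your approach (the standard recursive witness for $\Ve_\alpha$, and the coding of $\He_{\mu^+}$ via Mostowski collapses of well-founded relations on $\mu$, with $\Pow(\mu)$ available as a parameter to render well-foundedness $\Delta_0$) is the natural one and is presumably what the thesis contains. The cardinality bookkeeping you give for the witnesses, yielding $F = \id$, is accurate in both cases.
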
 

\begin{lemma}\label{Lemm:PIbound} For every $\Pi_1$ predicate  $R$, if $\phi$ is   $\Sigma^F_1(R)$ then it is $\Sigma^F_1(\PwSt)$.
\end{lemma}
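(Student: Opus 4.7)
The strategy is to replace the predicate $R$ by its $\Pi_1$-definition relativized to a sufficiently large $\He_\kappa$, using that $\He_\kappa$ is itself $\Sigma^{F}_1(\PwSt)$-definable by Fact \ref{CorBoundedH}. Since $R$ is $\Pi_1$, fix a $\Delta_0$-formula $\theta$ with $R(\bar z) \leftrightarrow \forall u \, \theta(\bar z, u)$. By Lemma \ref{lemmichka}(1) every $\He_\kappa$ is $R$-correct, and therefore for any transitive $R$-correct set $H$ (in particular any $\He_\kappa$) and any $\bar z \in H$ we have $R(\bar z) \leftrightarrow \forall u \in H \, \theta(\bar z, u)$. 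Consequently, given any $\Delta_0(R)$-formula $\psi(x,y)$, by recursively replacing each atomic occurrence $R(\bar z)$ by the bounded $\Delta_0$-expression $\forall u \in H \, \theta(\bar z, u)$ we obtain a $\Delta_0$-formula $\tilde{\psi}(x,y,H)$ such that $\tilde{\psi}(x,y,H) \leftrightarrow \psi(x,y)$ whenever $x,y \in H = \He_\kappa$.

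Now write $\phi(x) \leftrightarrow \exists y \, \psi(x,y)$ with $\psi \in \Delta_0(R)$ and with the bound $\rho_{\He}(y) \leq F(\rho_{\He}(x))$ on some witness. The claim is that
\[
\phi(x) \;\leftrightarrow\; \exists y \, \exists H \, \exists \kappa\, \bigl( H = \He_\kappa \;\wedge\; x,y \in H \;\wedge\; \tilde{\psi}(x,y,H)\bigr).
\]
For the forward direction, if $\phi(x)$ holds, pick a witness $y$ with $\rho_{\He}(y) \leq F(\rho_{\He}(x))$, let $\kappa := F(\rho_{\He}(x))^+$ and $H := \He_\kappa$; then $x,y \in H$, $\psi(x,y)$ holds, and hence so does $\tilde{\psi}(x,y,H)$. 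For the converse, if the right-hand side holds, then $\psi(x,y)$ follows from $\tilde{\psi}(x,y,H)$ by the equivalence above, giving $\phi(x)$. Since Fact \ref{CorBoundedH} gives a $\Sigma^{G}_1(\PwSt)$-definition of the map $\kappa \mapsto \He_\kappa$ for some definably bounding $G$, and the remaining conjuncts are $\Delta_0$ in their parameters, the whole formula is $\Sigma^{F'}_1(\PwSt)$.

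The only point that requires care is verifying that the new witness --- consisting of $y$, $H = \He_\kappa$, $\kappa$, and the auxiliary data witnessing $H = \He_\kappa$ supplied by Fact \ref{CorBoundedH} --- has $\He$-rank bounded by a single definably bounding function $F'$ of $\rho_{\He}(x)$. This is the main bookkeeping step: $\rho_{\He}(\He_\kappa) = |\He_\kappa| \leq 2^{F(\rho_{\He}(x))}$ for $\kappa = F(\rho_{\He}(x))^+$, the auxiliary witness from Fact \ref{CorBoundedH} adds at most a further definably bounding jump, and $y$ is bounded by $F(\rho_{\He}(x))$ by hypothesis. Since definably bounding functions are closed under composition (the remark following Example \ref{alephbound}), the aggregate bound is given by a definably bounding function $F'$, which completes the verification that $\phi$ is $\Sigma^{F'}_1(\PwSt)$.
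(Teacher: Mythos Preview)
Your proof is correct and follows essentially the same approach as the paper's: both relativize to a suitable $\He_\theta$ (with $\theta = F(\rho_\He(x))^+$) and use Fact~\ref{CorBoundedH} to express ``$H = \He_\theta$'' in a $\Sigma_1^{G}(\PwSt)$ way. The only difference is presentational: the paper compresses the elimination of $R$ into the single clause ``$\He_\theta \models \phi(a)$'', whereas you spell out explicitly how each atomic occurrence of $R(\bar z)$ is replaced by the bounded formula $\forall u \in H\,\theta(\bar z,u)$, which makes the resulting $\Delta_0(\PwSt)$ matrix visible; both arguments yield a $\Sigma_1^{F'}(\PwSt)$ formula for a new definably bounding $F'$ (the paper's $F''$), not literally the original $F$.
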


\begin{proof} Suppose $\phi$ is $\Sigma_1^F(R)$. Then for every $a$ we have 

$$\phi(a) \; \Leftrightarrow \; \exists \He_\theta \: \left(\rho_\He(\He_\theta) < 2^{2^{F(\rho_\He(a))}} \; \: \land \; \: \He_\theta \models \phi(a)\right).$$
By the previous fact, ``being $\He_\theta$'' is $\Sigma^{F'}_1(\PwSt)$-definable (possibly another $F'$). In conjunction with the upper bound, the whole expression is also $\Sigma^{F''}_1(\PwSt)$-definable (for   $F''$ being  the maximum of  $F'$ and $\alpha \mapsto 2^{2^{F(\alpha)}}$). To see that the equivalence holds, let $\theta = F(\rho_\He(a))^+$. Then $\rho_\He(\He_\theta) \leq 2^\theta  \leq 2^{2^{F(\rho_\He(a))}}$, and moreover $\phi(a)$ is absolute for $\He_\theta$ by Lemma \ref{lemmichka} (2). \end{proof}

\begin{corollary} \label{corol} $\;$

 \begin{enumerate}

\item For every $\Pi_1$ predicate $R$ we have $\USR(R) \leq \USR(\PwSt)$. In particular, if $\USR(\PwSt)$ is defined then $\USR(R)$ is also defined. 

\item If  $\L^*$ is any logic which is \emph{ boundedly symbiotic} to some $\Pi_1$-predicate $R$, has $\Delta_0$-definable syntax and $\dep(\L^*) = \omega$, then $\ULST_\infty(\L^*) \leq \ULST_\infty(\L^2)$. In particular, if $\ULST_\infty(\L^2)$ is defined, then so is $\ULST_\infty(\L^*)$. \end{enumerate} \end{corollary}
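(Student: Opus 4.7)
For part (1), my plan is to directly chase definitions and reduce the $R$-case to the $\PwSt$-case. Set $\kappa := \USR(\PwSt)$ (assuming it is defined; otherwise the inequality is vacuous). Let $F$ be any definably bounding function and $\K$ a $\Sigma_1^F(R)$-definable class of $\tau$-structures (in a fixed, hence $\Sigma_1^F(\PwSt)$-describable, vocabulary) closed under isomorphisms, and suppose some $\A \in \K$ has $|\A| \geq \kappa$. By Lemma \ref{Lemm:PIbound}, the defining formula of $\K$ is $\Sigma_1^{F''}(\PwSt)$ for a suitable definably bounding $F''$. Now I simply apply $\USR(\PwSt) = \kappa$ to the class $\K$ viewed in its $\Sigma_1^{F''}(\PwSt)$-definition: for any $\kappa' > \kappa$ this yields $\B \in \K$ with $|\B| \geq \kappa'$ and an $\L_{\omega\omega}$-elementary embedding $\A \embedFOL \B$. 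This is exactly what is needed to conclude $\USR(R) \leq \kappa$.

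For part (2), my plan is to combine (1) with the Main Theorem \ref{SymbiosisTheorem} applied to the two relevant symbiotic pairs. Since $\L^*$ is, by hypothesis, boundedly symbiotic with the $\Pi_1$-predicate $R$, has $\Delta_0$-definable syntax and $\dep(\L^*) = \omega$, Theorem \ref{SymbiosisTheorem} gives $\ULST_\infty(\L^*) = \USR(R)$. Likewise, by Proposition \ref{boundedsymbiosis3}, the pair $(\L^2, \PwSt)$ is boundedly symbiotic, and $\L^2$ plainly has $\Delta_0$-definable syntax and dependence number $\omega$ (being a finitary logic obtained by adding a single second-order quantifier to $\Lww$), so Theorem \ref{SymbiosisTheorem} also gives $\ULST_\infty(\L^2) = \USR(\PwSt)$. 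Chaining with (1) yields
\[
\ULST_\infty(\L^*) \;=\; \USR(R) \;\leq\; \USR(\PwSt) \;=\; \ULST_\infty(\L^2),
\]
and the ``in particular'' clause follows because if the right-hand cardinal is defined then so is every term to its left.

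The only step requiring any real care is the bookkeeping at the start of (1): one must check that applying Lemma \ref{Lemm:PIbound} genuinely transforms a parameter-free $\Sigma_1^F(R)$-definition of $\K$ into a parameter-free $\Sigma_1^{F''}(\PwSt)$-definition (so that $\USR(\PwSt)$ can legitimately be invoked), and that the finiteness / $\Sigma_1^{F''}(\PwSt)$-definability of $\tau$ is preserved under this translation. Since $\tau$ was already required to be $\Sigma_1^F(R)$-definable in the definition of $\USR(R)$, and $\Sigma_1^F(R) \subseteq \Sigma_1^{F''}(\PwSt)$ by the lemma, this goes through without issue. Beyond that, parts (1) and (2) are essentially formal consequences of results already proved in the paper; no new technical obstacle is anticipated.
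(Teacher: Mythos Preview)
Your proposal is correct and is exactly the argument the paper intends: the corollary is stated without proof precisely because part (1) is an immediate consequence of Lemma~\ref{Lemm:PIbound}, and part (2) follows by applying Theorem~\ref{SymbiosisTheorem} to both pairs $(\L^*,R)$ and $(\L^2,\PwSt)$ (the latter via Proposition~\ref{boundedsymbiosis3}) and then chaining with (1). Your attention to the parameter-free and vocabulary-definability bookkeeping is appropriate and goes slightly beyond what the paper makes explicit.
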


A famous result of Magidor \cite{Magidor} shows that the least cardinal $\kappa$ for which $\L^2$ satisfies a $\kappa$-version of compactness, is the   least extendible cardinal. One can show that this version of compactness implies $\ULST_\infty(\L^2) = \kappa$. Therefore an extendible cardinal provides an upper bound for $\ULST_\infty(\L^2)$ and  $\USR(\PwSt)$, as well as other  pairs $\L^*$ and $R$ satisfying bounded symbiosis and the conditions of Theorem \ref{SymbiosisTheorem}. For completeness, we include a short proof of this fact. 

\begin{theorem}[Magidor \cite{Magidor}]  \label{Thm:PWUpperBOU}
If $\kappa$ is an extendible cardinal, then $$\USR(\PwSt) = \ULST(\L^2) \leq \kappa.$$ Moreover, $\USR(R) \leq \kappa$ for  every $\Pi_1$ predicate $R$, and $\ULST_\infty(\L^*) \leq \kappa$ for any $\L^*$ which is boundedly symbiotic with some $\Pi_1$ predicate, and which has $\Delta_0$-definable syntax and $\dep(\L^*) = \omega$. 
\end{theorem}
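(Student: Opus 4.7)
The plan is to establish $\USR(\PwSt) \leq \kappa$ directly from the definition of extendibility, and then obtain everything else formally: $\USR(\PwSt) = \ULST_\infty(\L^2)$ follows by combining Theorem \ref{SymbiosisTheorem} with the bounded symbiosis of $\L^2$ and $\PwSt$ from Proposition \ref{boundedsymbiosis3} (noting that $\L^2$ has $\Delta_0$-definable syntax and $\dep(\L^2)=\omega$), while the remaining bounds $\USR(R) \leq \kappa$ and $\ULST_\infty(\L^*) \leq \kappa$ are immediate from Corollary \ref{corol}.

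For the core step, fix a definably bounding $F$, a $\Sigma_1^F(\PwSt)$-definable class $\K$ closed under isomorphisms with defining formula $\Phi$, a structure $\A \in \K$ with $|A| \geq \kappa$, and a target cardinal $\kappa' > \kappa$; if $|A| \geq \kappa'$ there is nothing to do, so assume $\kappa \leq |A| < \kappa'$. Pick a limit ordinal $\lambda > \kappa'$ large enough that $\A$ and some $\Sigma_1^F$-witness $y$ for $\Phi(\A)$ both lie in $V_\lambda$; this is possible since $\rho_\He(y) \leq F(\rho_\He(\A))$ is bounded. Apply extendibility to obtain an elementary $j : V_\lambda \to V_\mu$ with $\crit(j) = \kappa$ and $j(\kappa) > \lambda$; since $\lambda$ is a limit, elementarity forces $\mu$ to be a limit as well, so both $V_\lambda$ and $V_\mu$ are transitive and $\PwSt$-correct. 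Combining $\PwSt$-correctness with the absoluteness of $\Delta_0(\PwSt)$-formulas we get $V_\lambda \models \Phi(\A)$, hence by elementarity $V_\mu \models \Phi(j(\A))$, and by upwards absoluteness of $\Sigma_1(\PwSt)$ out of the $\PwSt$-correct model $V_\mu$ (Lemma \ref{lemmichka}) it follows that $\Phi(j(\A))$ holds in $V$, i.e.\ $j(\A) \in \K$. For the size: $|j(A)| \geq j(\kappa) > \lambda > \kappa'$, since $|A| \geq \kappa$. For the elementary embedding: because first-order satisfaction is $\Delta_1$ (Remark \ref{remarklww}), for every first-order $\phi$ and tuple $\bar{a}$ from $A$ one has
$$\A \models \phi(\bar{a}) \iff V_\lambda \models (\A \models \phi(\bar{a})) \iff V_\mu \models (j(\A) \models \phi(j(\bar{a}))) \iff j(\A) \models \phi(j(\bar{a})),$$
so $j \upharpoonright A : \A \embedFOL j(\A)$ is the required first-order elementary embedding.

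The main conceptual subtlety is reconciling the two absoluteness frameworks at play: bounded $\Sigma_1^F(R)$ is calibrated by $\He$-ranks and is naturally reflected by $\He_\theta$ models via Lemma \ref{lemmichka}, whereas extendibility naturally supplies embeddings between $V_\lambda$ and $V_\mu$. This is handled by the simple observation that $V_\lambda$ for a limit ordinal $\lambda$ is transitive and closed under $\Pow$, hence $\PwSt$-correct, so the same upwards/downwards absoluteness for bounded $\Sigma_1(\PwSt)$-formulas applies. Once this bridging is in place, the large-cardinal input from extendibility directly produces both the enlarged structure in $\K$ and the required $\L_{\omega\omega}$-elementary embedding, and the remaining statements of the theorem reduce to invoking the symbiosis machinery.
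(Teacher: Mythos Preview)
Your proof is correct and follows essentially the same approach as the paper: establish $\USR(\PwSt)\leq\kappa$ directly by choosing a sufficiently large $V_\lambda$ containing $\A$ (and a $\Phi$-witness), applying extendibility to get $j:V_\lambda\to V_\mu$, and reading off $j(\A)\in\K$, $|j(\A)|\geq j(\kappa)>\kappa'$, and $j{\upharpoonright}A:\A\embedFOL j(\A)$; the remaining claims follow from Theorem~\ref{SymbiosisTheorem} and Corollary~\ref{corol}. Your write-up is slightly more careful than the paper's in explicitly taking $\lambda$ to be a limit (so $V_\lambda$ is $\PwSt$-correct) and in spelling out the $\Delta_1$-absoluteness argument for the elementary embedding, but the underlying argument is the same.
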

\begin{proof} Let $\kappa$ be extendible, and we will prove that $\USR(\PwSt)  \leq \kappa$. The other statements follow by Theorem \ref{SymbiosisTheorem} and Corollary \ref{corol}.

\p Let $\K$ be a $\Sigma_1^{F}(\PwSt)$-definable class of $\tau$-structures closed under isomorphisms. Fix some $\A \in \K$ with $|\A|\geq \kappa$. Let $\kappa'> \kappa$ be arbitrary. Let $\eta>\kappa'$ be such that $\A\in \Ve_\eta$ and $\Ve_\eta\models \Phi(\A)\land (|\A|\geq\kappa)$. Then there is an elementary embedding $J: \Ve_\eta \embed \Ve_\theta$ for some  $\theta$, and $J(\kappa)>\eta>\kappa'$. But then by elementarity we have  $\Ve_\theta\models \Phi(J(\A)) \land |J(\A)|\geq J(\kappa)$. Since $\Ve_\theta$ is $\PwSt$-correct, $\Phi(J(\A))$ holds, so $J(\A) \in \K$. Moreover, since $\theta$ is sufficiently large, we have $|\A| \geq J(\kappa) > \eta > \kappa'$. Finally,  $\A \; \embedFOL \; J(\A)$ holds by elementarity and first-order definability of ``$\A \models \phi$''.  \end{proof}



Now we look at how much large cardinal strength we can obtain from $\USR(\PwSt)$. 

\begin{theorem}\label{Theo:ExtN}
If $\USR(\PwSt)$ is defined, then there exists an $n$-extendible cardinal for every natural number $n>0$.
\end{theorem}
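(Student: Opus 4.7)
The plan is to apply $\USR(\PwSt)$ to a class coding the structures $\Ve_{\alpha+n}$ together with the ordinal $\alpha$ as a distinguished constant, then read off $n$-extendibility from the critical point of the resulting embedding.

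First, fix $n > 0$ and introduce the class
$$\K_n := \{\A \,:\, \A \cong (\Ve_{\alpha+n}, \in, \alpha) \text{ for some cardinal } \alpha\}$$
in the vocabulary with one binary predicate and one constant. This class is closed under isomorphisms by construction, and by Fact \ref{CorBoundedH} it is $\Sigma_1^F(\PwSt)$-definable without parameters (since $n$ is a fixed numeral). The boundedness of witnesses is immediate because $\rho_\He((\Ve_{\alpha+n},\in,\alpha)) = \beth_n(|\alpha|)$ coincides with the cardinality of the structure itself.

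Next, set $\kappa := \USR(\PwSt)$, take $\A := (\Ve_{\kappa+n}, \in, \kappa) \in \K_n$, and pick any $\kappa' > \beth_n(\kappa)$. Applying $\USR(\PwSt)$ produces (up to isomorphism) $\B = (\Ve_{\beta+n}, \in, \beta) \in \K_n$ with $|\B| \geq \kappa'$ and an elementary embedding $e : \A \embedFOL \B$. The cardinality bound forces $\beta > \kappa$, and since $\beta$ is a cardinal, in fact $\beta \geq \kappa^+ > \kappa + n$. Because $e$ preserves the constant, $e(\kappa) = \beta$, so $e$ has a critical point $\mu \leq \kappa$.

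The final step is to show that $\mu$ is $n$-extendible. If $\mu = \kappa$, then $e$ itself witnesses $n$-extendibility of $\kappa$. If $\mu < \kappa$, one restricts $e$ to $\Ve_{\mu+n}$ and checks that $j := e \upharpoonright \Ve_{\mu+n} : \Ve_{\mu+n} \to \Ve_{e(\mu)+n}$ is first-order elementary, with critical point $\mu$ and $e(\mu) \geq \mu^+ > \mu+n$; the latter inequality uses that $\mu$, being the critical point of an embedding out of $\Ve_{\kappa+n}$ with $n \geq 1$, is strongly inaccessible, and hence so is $e(\mu)$. I expect the most delicate step to be this restriction argument: the key point is that $\Ve_{\mu+n}$ and its satisfaction relation are first-order definable inside $\Ve_{\kappa+n}$ from the parameter $\mu$, so the elementarity of $j$ follows by applying elementarity of $e$ to formulas of the form ``$\Ve_{\mu+n} \models \phi(x)$''.
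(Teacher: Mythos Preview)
Your proof is correct and takes essentially the same approach as the paper: both define the class of (isomorphic copies of) $(\Ve_{\alpha+n},\in,\alpha)$, apply $\USR(\PwSt)$ to obtain an embedding sending the constant $\kappa$ to some $\beta>\kappa$, and then read off $n$-extendibility of the critical point via the restriction argument using definability of ``$\Ve_{\mu+n}\models\varphi$'' inside $\Ve_{\kappa+n}$. The only cosmetic differences are that you restrict $\alpha$ to cardinals and are more explicit about the choice of $\kappa'$ and about $e(\mu)>\mu+n$, whereas the paper simply records $n<J(\lambda)$ (which is automatic since the critical point is uncountable).
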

\begin{proof}


Assume that $\USR(\PwSt)=\kappa$. Let  $\K$ to be the class of all structures  which are   isomorphic to $(\Ve_{\alpha+n}, \in, \alpha)$, in the language $\{E,a\}$. 

\p By Fact \ref{CorBoundedH}, being a structure of the form $(\Ve_{\alpha+n}, \in, \alpha)$ is $\Sigma_1^{F}(\PwSt)$-definable. Then $(M,a,E) \in \K$ iff $\exists (\Ve_{\alpha+n}, \in, \alpha)$ and $\exists f: (M,a,E) \cong  (\Ve_{\alpha+n}, \in, \alpha)$. Notice that also $\rho_\He(\Ve_{\alpha+n}) \leq \rho_\He(M)$. Thus, $\K$ is $\Sigma_1^F(\PwSt)$-definable.

\p Take any $\mu \geq \kappa$. Since $( \Ve_{\mu+n},\mu, {\in}) \in \K$,   by $\USR(\PwSt)$ there exists an elementary embedding  
$$J:( \Ve_{\mu+n},\mu, \in)\embed_{\L_{\omega\omega}} ( \Ve_{\beta+n},\beta,\in)$$ for some $\beta>\mu$, which maps $\mu$ to $\beta$. Let $\lambda$ be the critical point of $J$, which is $\leq \mu$. But then $J{\upharpoonright} \Ve_{\lambda+n}:\Ve_{\lambda+n}\embed \Ve_{J(\lambda)+n}$ (this includes the case $\lambda=\mu$), since:
\begin{align*}
\Ve_{\lambda+n}\models \varphi(x_1,\ldots,x_n)  &\Leftrightarrow \Ve_{\mu+n}\models ( \Ve_{\lambda+n} \models \varphi(x_1,\ldots,x_n)) \\
&\Leftrightarrow \Ve_{\beta+n}\models ( J(\Ve_{\lambda+n}) \models \varphi(J(x_1),\ldots,J(x_n)))  \\
&\Leftrightarrow\Ve_{\beta+n}\models  ( \Ve_{J(\lambda)+n} \models \varphi(J(x_1),\ldots,J(x_n))) \\ 
&\Leftrightarrow \Ve_{J(\lambda)+n}\models \varphi(J(x_1),\ldots,J(x_n))
\end{align*}  

\p Since $n< J(\lambda)$, it follows that $\lambda$ is $n$-extendible. \qedhere

\end{proof} 


\begin{corollary}\label{CorL2L}
If $\ULST_\infty(\L^2)$ is defined then there exists an $n$-extendible cardinal for every  $n$.
\end{corollary}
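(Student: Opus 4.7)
The plan is to combine the main theorem of the paper with the bounded symbiosis result for second-order logic and the large cardinal lower bound just established. Concretely, I would argue as follows.

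First, I would note that second order logic $\L^2$ satisfies the hypotheses of Theorem \ref{SymbiosisTheorem}: it has $\Delta_0$-definable syntax (its formulas are finitary syntactic objects, built from a finite set of logical constants and the symbols of $\tau$) and $\dep(\L^2) = \omega$ (each formula uses only finitely many non-logical symbols). Moreover, by Proposition \ref{boundedsymbiosis3}, $\L^2$ and $\PwSt$ are boundedly symbiotic, and $\PwSt$ is a $\Pi_1$ predicate. Hence all the hypotheses of Theorem \ref{SymbiosisTheorem} are in place for the pair $(\L^2, \PwSt)$.

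Now suppose $\ULST_\infty(\L^2) = \kappa$ is defined. Applying Theorem \ref{SymbiosisTheorem} to the boundedly symbiotic pair $(\L^2, \PwSt)$, I get $\USR(\PwSt) = \kappa$; in particular $\USR(\PwSt)$ is defined. Then Theorem \ref{Theo:ExtN} directly yields that for every natural number $n > 0$ there exists an $n$-extendible cardinal, which is the desired conclusion.

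Since the corollary is really just a transport of Theorem \ref{Theo:ExtN} across the bridge provided by the Main Theorem, there is no genuine obstacle beyond verifying that $(\L^2,\PwSt)$ fits the hypotheses. The only subtle point worth flagging explicitly is that $\L^2$ indeed has $\Delta_0$-definable syntax and dependence number $\omega$, which is immediate from its finitary character, so no extra work is needed.
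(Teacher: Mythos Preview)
Your proposal is correct and is exactly the intended argument: the paper states this corollary without proof immediately after Theorem~\ref{Theo:ExtN}, and the implicit reasoning is precisely to combine Proposition~\ref{boundedsymbiosis3}, Theorem~\ref{SymbiosisTheorem}, and Theorem~\ref{Theo:ExtN} as you do.
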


Notice that the only reason that the proof works for  $n < \omega$ and not arbitrary $\alpha$, is because the class $\K$ needs to be definable.  It is easy to adapt the proof to show that there exists a $\gamma$-extendible cardinal for any $\Sigma_1^{F}(\PwSt)$-definable ordinal  $\gamma$. In fact, we conjecture that the consistency strength is exactly an extendible.

\begin{Conjecture} \label{conji} $\USR(\PwSt)$ and $\ULST_\infty(\L^2)$ are defined if and only if there exists and extendible cardinal. 
\end{Conjecture}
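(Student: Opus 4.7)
The $(\Leftarrow)$ direction is already in place: Theorem \ref{Thm:PWUpperBOU} together with Theorem \ref{SymbiosisTheorem} shows that the existence of an extendible cardinal $\kappa$ implies $\USR(\PwSt) \leq \kappa$ and $\ULST_\infty(\L^2) \leq \kappa$. So my plan is focused entirely on the $(\Rightarrow)$ direction, and by Theorem \ref{SymbiosisTheorem} it is enough to show that if $\USR(\PwSt) = \kappa$ is defined, then some cardinal $\leq \kappa$ is extendible. The task is to strengthen Theorem \ref{Theo:ExtN} from \emph{``$n$-extendible cardinals exist for every $n < \omega$''} to \emph{``a single cardinal witnesses $\gamma$-extendibility for every ordinal $\gamma$''}.

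The natural strategy is to replace the class used in Theorem \ref{Theo:ExtN} (which depends on the parameter $n$) by a parameter-free class that still packages the parameter $\gamma$ into the structure itself. Concretely, I would consider
$$\K \;=\; \{(\Ve_{\alpha + \beta},\in,\alpha,\beta) \,:\, \alpha, \beta \in \Ord\}$$
in the vocabulary $\{E,a,b\}$. By Fact \ref{CorBoundedH} this class is $\Sigma^F_1(\PwSt)$-definable without parameters and closed under isomorphisms. For an arbitrary ordinal $\gamma$, apply $\USR(\PwSt) = \kappa$ to the structure $\A = (\Ve_{\kappa+\gamma},\in,\kappa,\gamma)$ (choosing the target cardinality bound $\kappa^*$ much larger than $\gamma$) to obtain a first-order elementary embedding $J : \A \embedFOL (\Ve_{\alpha' + \beta'}, \in, \alpha', \beta')$ with $J(\kappa)=\alpha'$ and $J(\gamma)=\beta'$. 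In the favourable case that the critical point $\lambda$ equals $\kappa$ and $\gamma < \kappa$, the constant $\gamma$ is fixed ($\beta' = \gamma$) while $\alpha' > \kappa > \gamma$, and the calculation of Theorem \ref{Theo:ExtN} adapts to give $J \upharpoonright \Ve_{\kappa+\gamma} : \Ve_{\kappa+\gamma} \preccurlyeq \Ve_{\alpha'+\gamma}$ with $J(\kappa) > \gamma$. This already yields ``$\gamma$-extendibility of $\kappa$ for every $\gamma < \kappa$'', a genuine strengthening of Theorem \ref{Theo:ExtN}.

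The hard part has two layers. First, the critical point $\lambda$ need not be $\kappa$; a priori it could be any cardinal $\leq \kappa$, and then we only learn that $\lambda$ (not $\kappa$) is $\gamma$-extendible. To address this, I would exploit the minimality of $\kappa = \USR(\PwSt)$: if, for some $\gamma$, every embedding obtained from $\USR$ had critical point strictly below $\kappa$, one could try to encode this failure into a $\Sigma^F_1(\PwSt)$ class witnessing a smaller $\USR$-number, contradicting minimality. Second, when $\gamma \geq \kappa$, elementarity forces $\beta' \geq \alpha'$, and there is no direct reason that $\alpha' = J(\kappa) > \gamma$ as required for $\gamma$-extendibility. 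Here I would try a more refined coding, e.g.\ enriching $\K$ with an additional constant constrained by a first-order requirement that pins $J(\kappa)$ above $J(\gamma)$, or iterating the reflection to absorb the successor step from $\gamma$-extendibility to $\gamma^+$-extendibility.

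The principal obstacle, and the reason the statement remains a conjecture, is the second layer: controlling the image of $\kappa$ when $\gamma$ is allowed to exceed $\kappa$. Morally, Magidor's upper bound uses the \emph{compactness} of $\L^2$ (closure under infinite conjunctions of first-order sentences), whereas $\USR(\PwSt)$ only reflects \emph{single} formulas; bridging this gap appears to require new combinatorial input going beyond the direct application of $\USR$, perhaps through a simultaneous reflection argument across an unbounded class of $\gamma$'s, or through an intermediate principle that captures some form of uniformity in the embeddings produced by $\USR(\PwSt)$.
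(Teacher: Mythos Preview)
This statement is a \emph{conjecture} in the paper, not a theorem: the paper does not prove it and offers no proof to compare against. What the paper does establish is the $(\Leftarrow)$ direction (Theorem \ref{Thm:PWUpperBOU}) and, for $(\Rightarrow)$, only the partial result of Theorem \ref{Theo:ExtN} that $n$-extendible cardinals exist for every finite $n$. The paper's only comment on the gap is the remark preceding the conjecture: the proof of Theorem \ref{Theo:ExtN} works for $n<\omega$ because the class $\K$ must be parameter-free definable, and it extends to any $\Sigma_1^F(\PwSt)$-definable ordinal $\gamma$, but not to arbitrary $\gamma$.

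Your proposal is therefore not a proof to be checked but a discussion of possible strategies, and you are clearly aware of this. Your idea of absorbing $\gamma$ as a constant in the structure is natural and does get past the definability obstacle the paper flags. However, as you yourself note, two genuine problems remain: controlling the critical point, and (more seriously) ensuring $J(\kappa)>\gamma$ when $\gamma\geq\kappa$. Neither of your suggested fixes is worked out, and the second in particular is where the real difficulty lies; your proposal correctly identifies this as the reason the statement is open rather than resolving it. In short: there is nothing to compare your attempt to, and your write-up is an honest outline of obstacles rather than a proof.
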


  \label{DiscussionBuond}  

\section{Questions and concluding remarks} \label{SectionQuestions}

The biggest question left open in this paper is the exact consistency strength of $\USR(\PwSt)$ and $\ULST_\infty(\L^2)$, i.e., Conjecture \ref{conji}.

\bigskip Other questions that we have not investigated involve a similar analysis of the large cardinal strength for other symbiotic pairs $\L^*$ and $R$.

\begin{Question} What is the large cardinal strength (or, at least, lower and upper bounds), for the principles $\USR(R)$ and $\ULST_\infty(\L^*)$, for other boundedly symbiotic pairs $R$ and $\L^*$, such as the ones in \cite[Proposition 4]{Symbiosis}? \end{Question}

 Another important issue, which we have not investigated in this paper, is the study of various \emph{compactness} properties of strong logics.

\begin{definition}
A logic $\L^{*}$ is $(\kappa,\gamma)$-compact if for every set $T$ of sentences of size $\gamma$, if every $<\kappa$-sized subset of $T$ has a model, then $T$ has a model. A logic $\L^*$ is $(\kappa,\infty)$-compact if it is $(\kappa,\gamma)$-compact for every $\gamma$. Classical compactness is $(\omega,\infty)$-compactness.
\end{definition}

Most strong logics are not $(\omega,\infty)$-compact but may be $(\kappa,\infty)$-compact for some $\kappa$. Often such a  $\kappa$ will exhibit large cardinal properties, e.g., Magidor's result on $\L^2$ \cite{Magidor}. As we mentioned in the previous section, it is easy to see that:

\begin{center} If  $\L^{*}$ is $(\kappa,\infty)$-compact then $\ULST_{\infty}(\L^{*})\leq \kappa$. \end{center}

We do not know whether the converse holds. The following questions seem interesting and worth investigating:

\begin{Question}  Assume that $\kappa$ is a regular cardinal. For which logics does $\ULST_{\infty}(\L^{*})\leq \kappa$ imply  $(\kappa,\infty)$-compactness? \end{Question}

One can try to look for a set-theoretic principle involving $\Delta_1(R)$ definable classes of structures, which would correspond to $(\kappa,\infty)$-compactness in a similar way as in Theorem \ref{SymbiosisTheorem}.

\begin{Question} Is there a set-theoretic principle $P(R)$, for classes definable using a $\Pi_1$-parameter $R$, such that if   $R$ and $\L^{*}$ are (bounded) symbiotic, then $P(R) = \kappa$ if and  only if $\L^{*}$ is $(\kappa,\infty)$-compact? \end{Question}

Answering the last question could involve extensions of partial orders within a fixed $\Delta_1(R)$-class, using  ideas from \cite{MakowskyShelah}.  Notice, however, that when dealing with compactness, large vocabularies are essential, so the corresponding principles will require the use of parameters, which will restrict the class of logics $\L^*$ to which it can apply.

\bigskip 

\noindent{\bf Acknowledgements:} We would like to thank Soroush Rafiee Rad and Robert Passmann for initiating this research and providing valuable input. 

\bibliographystyle{plain}
\bibliography{REFERENCES}

\end{document}